\newcommand{\argmin}{\operatornamewithlimits{argmin}}
\newcommand{\argmax}{\operatornamewithlimits{argmax}}
\newtheorem{theorem}{Theorem}[section]
\newtheorem{lemma}[theorem]{Lemma}
\newdefinition{definition}{Definition}
\newproof{proof}{Proof}
\begin{document}

\begin{frontmatter}

\title{Recovering Non-negative and Combined Sparse Representations}

\author{Karthikeyan~Natesan~Ramamurthy \corref{corresp}}
\author{Jayaraman~J.~Thiagarajan}
\author{Andreas~Spanias}

\address{SenSIP Center, School of ECEE, Arizona State University, Tempe, AZ 85287-5706 USA.}

\cortext[corresp]{
Corresponding author:
Email: knatesan@asu.edu, Phone: 001-(480)-297-6883, Fax: 001-(480)-965-8325
}

\begin{abstract}
The non-negative solution to an underdetermined linear system can be uniquely recovered sometimes, even without imposing any additional sparsity constraints. In this paper, we derive conditions under which a unique non-negative solution for such a system can exist, based on the theory of polytopes. Furthermore, we develop the paradigm of combined sparse representations, where only a part of the coefficient vector is constrained to be non-negative, and the rest is unconstrained (general). We analyze the recovery of the unique, sparsest solution, for combined representations, under three different cases of coefficient support knowledge: (a) the non-zero supports of non-negative and general coefficients are known, (b) the non-zero support of general coefficients alone is known, and (c) both the non-zero supports are unknown. For case (c), we propose the combined orthogonal matching pursuit algorithm for coefficient recovery and derive the deterministic sparsity threshold under which recovery of the unique, sparsest coefficient vector is possible. We quantify the order complexity of the algorithms, and examine their performance in exact and approximate recovery of coefficients under various conditions of noise. Furthermore, we also obtain their empirical phase transition  characteristics. We show that the basis pursuit algorithm, with partial non-negative constraints, and the proposed greedy algorithm perform better in recovering the unique sparse representation when compared to their unconstrained counterparts. Finally, we demonstrate the utility of the proposed methods in recovering images corrupted by saturation noise.
\end{abstract}

\begin{keyword}
underdetermined linear system \sep sparse representations \sep non-negative representations \sep orthogonal matching pursuit  \sep unique sparse solution

\end{keyword}

\end{frontmatter}
\journal{Digital Signal Processing}
\section{Introduction}
\label{sec:intro}
We investigate the problem of recovering non-negative and combined sparse representations from underdetermined linear models. The system of linear equations with the constraint that the solution is non-negative can be expressed as 
\begin{equation}
\label{eqn:nonneg_rep}
\mathbf{y} = \mathbf{X} \boldsymbol{\alpha}, \text{ where } \boldsymbol{\alpha} \geq 0,
\end{equation}where $\mathbf{y} \in \mathbb{R}^{M}$  is the data vector, $\boldsymbol{\alpha} \in \mathbb{R}^{K_x}$ is the non-negative solution (coefficient vector) and $\mathbf{X} \in \mathbb{R}^{M \times K_x}$ is the dictionary with $K_x > M$. When only a part of the solution is constrained to be non-negative and the rest is unconstrained (general), we obtain the combined representation model, 
\begin{equation}
\label{eqn:nonneg_rep_sparse_corr}
\mathbf{y} = \mathbf{X} \boldsymbol{\alpha} + \mathbf{D} \boldsymbol{\beta}, \text{ where } \boldsymbol{\alpha} \geq 0.
\end{equation} Here, the coefficient vector $\boldsymbol{\beta} \in \mathbb{R}^{K_d}$ is unconstrained, and $\mathbf{X} \in \mathbb{R}^{M \times K_x}$ and $\mathbf{D} \in \mathbb{R}^{M \times K_d}$ are the sub-dictionaries for the non-negative and general representations respectively. We denote the combined coefficient vector as $\boldsymbol{\delta} = [\boldsymbol{\alpha}^T \text{ } \boldsymbol{\beta}^T]^T$, and the combined dictionary as $\mathbf{G}=[\mathbf{X} \quad \mathbf{D}]$. We assume that $\mathbf{G}$ is overcomplete with $K_x+K_d > M$, and the columns of the dictionaries are normalized to have unit $\ell_2$ norm. The sparsest solutions to (\ref{eqn:nonneg_rep}) and (\ref{eqn:nonneg_rep_sparse_corr}) are obtained by minimizing the $\ell_0$ norm, the number of non-zero elements, of the corresponding non-negative coefficient vector ($\boldsymbol{\alpha}$) or the combined coefficient vector ($\boldsymbol{\delta}$). In both the cases, the unique minimum $\ell_0$ norm solution, when it exists, will be referred to as $ML_0$ solution. In this paper, we focus on obtaining deterministic guarantees for recovery of the $ML_0$ solutions to the linear systems (\ref{eqn:nonneg_rep}) and (\ref{eqn:nonneg_rep_sparse_corr}), using both convex and greedy algorithms, based on the properties of the dictionaries.

\subsection{Applications}
\label{sec:app_models}
Some of the applications of the non-negative representation model in (\ref{eqn:nonneg_rep}), and the combined model in (\ref{eqn:nonneg_rep_sparse_corr}) are in image recovery \cite{Ramamurthy2011}, automatic speech recognition using exemplars \cite{Gemmeke2011}, protein mass spectrometry \cite{Slawski2010a}, astronomical imaging \cite{Bardsley2006}, spectroscopy \cite{Donoho1992}, source separation \cite{Benaroya2003}, clustering/semi-supervised learning of data \cite{Cheng2010,He_Semi}, sparse portfolio optimization \cite{Brodie2008} to name a few. In particular, we will briefly mention two applications where the proposed combined model is directly relevant. 

\subsubsection{Signal/Image Recovery}
\label{sec:im_rec_app}
Natural image patches can be sparsely represented using predefined and learned dictionaries and this property is used favorably in many image recovery applications such as denoising, inpainting, super-resolution and compressed sensing. When the representation of the image has two components, which are sparse in two distinct dictionaries, and when the sign of the coefficients in one of the dictionaries is known, the proposed combined sparse models can be used to recover the coefficients and hence the image itself. One such example application for the proposed model is demonstrated in Section \ref{sec:image_patch_rec}, where we recover images corrupted by saturation noise. Another potential application is in compressed recovery of sparse signals, when the signs of a subset of the coefficient vector is known. The utility of the combined model in this application is illustrated in Sections \ref{sec:exact_rec}, \ref{sec:approx_rec} and \ref{sec:phase_tran}.

\subsubsection{Sparse Markowitz Portfolio Optimization}
\label{sec:sparse_marko}
In portfolio optimization, the goal is to select assets for a given capital that balances high returns with low risk. Recently, it has been proposed that this can be solved as a sparse optimization problem with appropriate constraints \cite{Brodie2008}. In this context, a negative coefficient corresponds to a \textit{short-position} on the portfolio and a non-negative coefficient corresponds to a \textit{no-short position}. When certain positions are mandated to be no-shorts because of possible government or market regulations, the combined model can be effectively used to select an optimal portfolio.

\subsection{Prior Work}
\label{sec:prior_work}
For the non-negative representation model in (\ref{eqn:nonneg_rep}), a sufficiently sparse $ML_0$ solution can be recovered by minimizing the $\ell_1$ norm of $\boldsymbol{\alpha}$, using the non-negative version of the basis pursuit (BP) algorithm \cite{Chen_BP}, which we refer to as NN-BP. The optimization program can be  expressed as
\begin{equation}	
\label{eqn:l1_min}
\min_{\boldsymbol{\alpha}} \mathbf{1}^T \boldsymbol{\alpha} \text{ subject to } \mathbf{y} = \mathbf{X} \boldsymbol{\alpha}, \boldsymbol{\alpha} \geq \mathbf{0}.
\end{equation} The conditions on $\mathbf{X}$ under which the recovery of $ML_0$ solution using (\ref{eqn:l1_min}) is possible have been derived based on the neighborliness of polytopes \cite{DonohoNN,Donoho2010,Wang2010}, and the non-negative null-space property \cite{Khajehnejad2009}. A non-negative version of the greedy orthogonal matching pursuit (OMP) algorithm \cite{Tropp2004}, which we will refer to as NN-OMP, for recovering the coefficients has also been proposed \cite{Bruckstein2008a}. If the set \begin{equation}
\label{eqn:nn_rec_uniq}
\{\boldsymbol{\alpha} | \mathbf{y} = \mathbf{X}\boldsymbol{\alpha}, \boldsymbol{\alpha} \geq 0 \}
\end{equation} contains only one solution, we can use any variational function instead of the $\ell_1$ norm in order to obtain the unique non-negative solution  \cite{Donoho2010, Wang2010,Bruckstein2008a}. In particular, the solution can be obtained by using the non-negative least squares (NNLS) algorithm \cite{Slawski2010a, Slawski2011}.

A major part of our work investigates the combined sparse representation model introduced in (\ref{eqn:nonneg_rep_sparse_corr}), where only a part of the sparse coefficient vector is constrained to be non-negative. We consider the \textit{deterministic sparsity thresholds} i.e., the maximum number of non-zero coefficients possible in the $ML_0$ solution, such that the $ML_0$ solution can be uniquely recovered. To the best of our knowledge, such an investigation has not been reported so far in the literature. However, when both $\boldsymbol{\alpha}$ and $ \boldsymbol{\beta}$ are unconstrained general sparse vectors, the sparsity thresholds for recovery of the $ML_0$ solution have been presented in \cite{Kuppinger2011, Studer2011}. By considering the coherence parameters of $\mathbf{X}$ and $\mathbf{D}$ separately, the authors in \cite{Kuppinger2011} show that an improvement up to a factor of two can be achieved in the deterministic sparsity threshold when compared to considering $\mathbf{X}$ and $\mathbf{D}$ together as a single dictionary. Note that deterministic sparsity thresholds provide guarantees that hold for all sparsity patterns and non-zero values in the coefficient vectors. \textit{Probabilistic} or \textit{robust} sparsity thresholds, that hold for most sparsity patterns and non-zero values in the coefficient vectors have also been derived in \cite{Kuppinger2011}, again for the case where $\boldsymbol{\alpha}$ and $ \boldsymbol{\beta}$ are general sparse vectors. When this representation is approximately sparse and corrupted by additive noise, theory and algorithms for coefficient recovery are presented in \cite{Studer2011a}.

\subsection{Contributions}
\label{sec:contrib}
We present deterministic recovery guarantees for both the non-negative and the combined sparse representation models given by (\ref{eqn:nonneg_rep}) and (\ref{eqn:nonneg_rep_sparse_corr}) respectively. Furthermore, we propose a greedy algorithm for performing coefficient recovery in combined representations and derive deterministic sparsity thresholds for unique recovery using $\ell_1$ minimization and the proposed greedy algorithm.

For the non-negative model in (\ref{eqn:nonneg_rep}), we derive the sufficient conditions for (\ref{eqn:nn_rec_uniq}) to be singleton based on the neighborliness properties of the quotient polytope corresponding to the dictionary $\mathbf{X}$. Similar analyses reported in \cite{Donoho2010,Wang2010} assume that the dictionary $\mathbf{X}$ is obtained from a random ensemble and append a row of ones to it, such that the row span of $\mathbf{X}$ contains the vector $\mathbf{1}^T$. In contrast, we do not assume any randomness on $\mathbf{X}$ and only require that its row span intersects the positive orthant. We show that the sparsity threshold on $\boldsymbol{\alpha}$, for the set (\ref{eqn:nn_rec_uniq}) to be singleton, is the same as the deterministic sparsity threshold for recovering the $ML_0$ solution of a general sparse representation. Whenever this threshold is satisfied, $\ell_1$-norm regularization in (\ref{eqn:l1_min}) can be replaced with any variational function. Section \ref{sec:nonneg_rep} presents the analysis of the non-negative representation model.

For the combined model in (\ref{eqn:nonneg_rep_sparse_corr}), we propose a variant of the greedy OMP algorithm, the combined OMP (COMB-OMP) algorithm, for performing coefficient recovery. We also consider a $\ell_1$ regularized convex algorithm, which we refer to as combined BP (COMB-BP). We derive the deterministic sparsity thresholds for recovering the $ML_0$ solution using both the COMB-BP and COMB-OMP algorithms. We show that a factor-of-two improvement in the sparsity threshold, observed when $\boldsymbol{\alpha}$ and $\boldsymbol{\beta}$ are general sparse vectors \cite{Kuppinger2011}, holds for recovery using the COMB-BP also. We also show that such an improvement in the sparsity threshold cannot be observed using the COMB-OMP algorithm, because of the partial non-negativity constraint on the coefficient vector. Furthermore, we obtain the sparsity thresholds in the following cases of coefficient support knowledge: (a) the non-zero support of both $\boldsymbol{\alpha}$, $\boldsymbol{\beta}$ are known, and (b) non-zero support of $\boldsymbol{\beta}$ alone is known. When analyzing case (b), we factor out the contribution of the general representation component and arrive at conditions under which $\ell_1$-norm regularization in the resulting optimization can be replaced with any variational function for the recovery of $\boldsymbol{\alpha}$. Section \ref{sec:non_neg_sparse_corr} presents all the details in the analysis of the combined representation model. As a final piece of our theoretical investigation, we present the computational complexities of OMP, COMB-OMP, BP and COMB-BP algorithms.

The performance of the COMB-BP and the COMB-OMP algorithms are also analyzed using simulations. The dictionary $\mathbf{G}$ is obtained from a Gaussian ensemble and the non-zero coefficients are obtained either from a uniform distribution or from a random sign distribution ($\pm1$). It is shown that both COMB-BP and COMB-OMP respectively perform better than their unconstrained counterparts, the BP and the OMP, particularly as the $K_x$ becomes larger. The algorithms show a similar behavior when recovering the sparse coefficients from noisy signals. Furthermore, the empirical phase transition characteristics of the proposed algorithms are provided and their utility in a real-world application of recovering images from saturation noise is demonstrated.

\subsection{Notation}
\label{sec:not}
Lowercase boldface letters denote column vectors and uppercase boldface denote matrices, e.g., $\mathbf{a}$ and $\mathbf{A}$ denote a vector and a matrix respectively. $\mathbf{a}_i$ indicates the $i^{th}$ column of the matrix $\mathbf{A}$. The Moore-Penrose pseudoinverse of a matrix $\mathbf{A}$, given by $(\mathbf{A}^T \mathbf{A})^{-1} \mathbf{A}^T$  is denoted as $\mathbf{A}^{\dagger}$. The notation $\mathbf{A} = diag(\mathbf{a})$ means that $\mathbf{A}$ is a diagonal matrix with the elements of the vector $\mathbf{a}$ as its diagonal. $|\mathbf{A}|$ refers to a matrix whose elements are the absolute values of the elements of $\mathbf{A}$ and the same notation applies to vectors also. The maximum row and maximum column sums of $\mathbf{A}$ are referred to as $\|\mathbf{A}\|_{\infty,\infty}$ and $\|\mathbf{A}\|_{1,1}$ respectively. A set is denoted as $\mathcal{A}$, its cardinality is given by $|\mathcal{A}|$ and its complement by $\mathcal{A}^c$. The operator $[.]^{+}$ returns the maximum of the argument and zero, and $\max(.,.)$ returns the maximum of the two arguments. $\mathbf{I}_K$ denotes an identity matrix of size $K \times K$, $\mathbf{1}_{K_1,K_2}$ is a matrix of ones with size $K_1 \times K_2$. Similar notation will be employed for defining vectors also. When it is clear from context, the subscripts will be dropped for simplicity.

\section{Non-negative Sparse Representations}	
\label{sec:nonneg_rep}
For the non-negative representation given in (\ref{eqn:nonneg_rep}), we denote the number of non-zero coefficients in $\boldsymbol{\alpha}$ as $S_x$. 
\begin{definition} (\cite{Donoho2003a})
The two-sided coherence (or simply coherence) of the dictionary $\mathbf{X}$ is
\begin{equation}
\label{eqn:two_sided_coh}
{\mu}_x = \max_{i \neq j} \frac{|\mathbf{x}_i^T\mathbf{x}_j|}{\|\mathbf{x}_i\|_2 \|\mathbf{x}_j\|_2},
\end{equation}
\end{definition}
\begin{definition} (\cite{Bruckstein2008a})
The one-sided coherence of the dictionary $\mathbf{X}$ is
\begin{equation}
\label{eqn:one_sided_coh}
{\sigma}_x = \max_{i \neq j} \frac{|\mathbf{x}_i^T\mathbf{x}_j|}{\|\mathbf{x}_i\|_2^2}.
\end{equation}
\end{definition}
If the columns of $\mathbf{X}$ are normalized, we have $\mu_x = \sigma_x$, and if they had different $\ell_2$ norms, we would have  $\mu_x \leq \sigma_x$ \cite[Lemma 1]{Bruckstein2008a}. 
\begin{definition} (\cite{Bruckstein2008a})
The dictionary $\mathbf{X}$ belongs to the class of matrices denoted as $\mathcal{M}^{+}$, if its row span intersects the positive orthant. 
\end{definition}
If $\mathbf{X} \in \mathcal{M}^{+}$, $\exists \mathbf{h}$ such that  $\mathbf{h}^T \mathbf{X} = \mathbf{w}^T,  \mathbf{w}> \mathbf{0}$. Let us define $\mathbf{W} = diag(\mathbf{w})$, $\mathbf{U} = \mathbf{X}\mathbf{W}^{-1}$, and denote $\sigma_u$ and $\mu_u$ as the one-sided and two-sided coherences of $\mathbf{U}$ respectively. In \cite[Theorem 1]{Wang2010} it is shown that $\mathbf{X} \in \mathcal{M}^+$ is a necessary condition for (\ref{eqn:nn_rec_uniq}) to be singleton. The main result in \cite[Theorem 2]{Bruckstein2008a} states that the set (\ref{eqn:nn_rec_uniq}) will be singleton if $\mathbf{X} \in \mathcal{M}^{+}$ and $S_x < 0.5(1+1/\sigma_u)$.

We will now state the main result of this section, whose proof will be relegated to the end of this section.
\begin{theorem}
\label{thm:non_negrec_coh}
When $\mathbf{X} \in \mathcal{M}^+$, the set defined in (\ref{eqn:nn_rec_uniq}) is singleton if the number of non-zero entries in $\boldsymbol{\alpha}$, $S_x < 0.5(1+1/\mu_x)$. 
\end{theorem}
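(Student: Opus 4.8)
The plan is to turn the claim into a statement about the facial structure of a polytope attached to $\mathbf{X}$ and then settle it with a coherence-based dual certificate. Since $\mathbf{X}\in\mathcal{M}^+$, I would first use the reduction behind \cite{Bruckstein2008a}: fix $\mathbf{h}$ with $\mathbf{w}=\mathbf{X}^T\mathbf{h}>\mathbf{0}$, set $\mathbf{W}=diag(\mathbf{w})$ and $\mathbf{U}=\mathbf{X}\mathbf{W}^{-1}$, so that $\mathbf{h}^T\mathbf{U}=\mathbf{1}^T$ and every column $\mathbf{u}_i$ lies on the hyperplane $\{\mathbf{z}:\mathbf{h}^T\mathbf{z}=1\}$. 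The map $\boldsymbol{\alpha}\mapsto\mathbf{W}\boldsymbol{\alpha}$ is a support-preserving bijection from the set in (\ref{eqn:nn_rec_uniq}) onto $\{\boldsymbol{\gamma}\ge\mathbf{0}:\mathbf{U}\boldsymbol{\gamma}=\mathbf{y}\}$, so the singleton property transfers; and because the two-sided coherence is invariant under positive column rescaling, $\mu_u=\mu_x$ (the one-sided coherence is not, which is the point of working with $\mu_x$ rather than the $\sigma_u$ of \cite{Bruckstein2008a}). Writing $c=\mathbf{h}^T\mathbf{y}$, every $\boldsymbol{\gamma}$ in the $\mathbf{U}$-set has $\mathbf{1}^T\boldsymbol{\gamma}=c$, so the set consists exactly of the convex representations of $\mathbf{y}/c$ over the vertices of the quotient polytope $T=\mathrm{conv}\{\mathbf{u}_1,\dots,\mathbf{u}_{K_x}\}$.

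Let $\Lambda=\mathrm{supp}(\boldsymbol{\alpha})$, so $|\Lambda|=S_x$ and $\mathbf{y}/c$ lies in the relative interior of $\mathrm{conv}\{\mathbf{u}_i:i\in\Lambda\}$. This representation is unique --- equivalently (\ref{eqn:nn_rec_uniq}) is a singleton --- provided (i) $\{\mathbf{u}_i:i\in\Lambda\}$ is affinely independent and (ii) $\mathrm{conv}\{\mathbf{u}_i:i\in\Lambda\}$ is a face of $T$. Step (i) is immediate: an affine dependence on $\Lambda$ would give a nonzero vector in the null space of $\mathbf{U}$ with at most $S_x$ nonzeros, but the smallest number of linearly dependent columns of $\mathbf{U}$ equals that of $\mathbf{X}$ and is at least $1+1/\mu_x>2S_x$. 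For (ii) I would construct a dual certificate $\mathbf{g}$ with $\mathbf{u}_i^T\mathbf{g}=0$ for $i\in\Lambda$ and $\mathbf{u}_j^T\mathbf{g}<0$ for $j\notin\Lambda$: pairing any representation $\mathbf{y}/c=\sum_i\lambda_i\mathbf{u}_i$ with $\mathbf{g}$ and using $\mathbf{g}^T(\mathbf{y}/c)=0$ forces $\lambda_j=0$ off $\Lambda$, after which (i) determines the weights on $\Lambda$. Transporting back through $\mathbf{W}$ then gives the theorem, and also the stated consequence that $\mathbf{1}^T\boldsymbol{\alpha}$ in (\ref{eqn:l1_min}) may be replaced by an arbitrary variational function.

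The certificate I would try is $\mathbf{g}=(\mathbf{I}-\mathbf{P}_\Lambda)\mathbf{g}_0$, with $\mathbf{P}_\Lambda$ the orthogonal projector onto $\mathrm{span}\{\mathbf{x}_i:i\in\Lambda\}$ (invertible Gram matrix since $S_x<\mathrm{spark}(\mathbf{X})$) and $\mathbf{g}_0$ a direction strictly negatively correlated with every column, which exists precisely because $\mathbf{X}\in\mathcal{M}^+$. Then $\mathbf{u}_i^T\mathbf{g}=0$ on $\Lambda$ by construction, while for $j\notin\Lambda$, $\mathbf{x}_j^T\mathbf{g}=\mathbf{x}_j^T\mathbf{g}_0-(\mathbf{X}_\Lambda^T\mathbf{x}_j)^T(\mathbf{X}_\Lambda^T\mathbf{X}_\Lambda)^{-1}(\mathbf{X}_\Lambda^T\mathbf{g}_0)$, and the correction is to be dominated using $\|\mathbf{X}_\Lambda^T\mathbf{x}_j\|_1\le S_x\mu_x$ together with the Neumann estimate $\|(\mathbf{X}_\Lambda^T\mathbf{X}_\Lambda)^{-1}\|\le(1-(S_x-1)\mu_x)^{-1}$, the bound $S_x<0.5(1+1/\mu_x)$ being exactly the threshold at which $S_x\mu_x<1-(S_x-1)\mu_x$. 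The hard part --- the step I expect to be the real obstacle --- is making (ii) hold \emph{uniformly} over all $j\notin\Lambda$ using only the normalized two-sided quantity $\mu_x$: the perturbation $\mathbf{P}_\Lambda\mathbf{x}_j$ of $\mathbf{x}_j$ must be controlled against $\mathbf{x}_j^T\mathbf{g}_0$ without letting the column norms of $\mathbf{U}$ (the entries of $\mathbf{w}$) back into the estimate, since those are what force the weaker $0.5(1+1/\sigma_u)$ threshold; pinning down that this uniform face certificate survives under the stated hypotheses is where the essential work of the proof lies.
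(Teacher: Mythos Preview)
Your reduction to $\mathbf{U}$, the observation that $\mu_u=\mu_x$ because two-sided coherence is scale-invariant, and the identification of the two tasks (affine independence on $\Lambda$; face property of $\mathrm{conv}\{\mathbf{u}_i:i\in\Lambda\}$) are all correct and match the spirit of the paper. The gap is exactly where you flag it: the certificate $\mathbf{g}=(\mathbf{I}-\mathbf{P}_\Lambda)\mathbf{g}_0$ cannot be pushed through using $\mu_x$ alone. With $\mathbf{g}_0=-\mathbf{h}$ you get
\[
\mathbf{x}_j^T\mathbf{g}=-w_j+(\mathbf{X}_\Lambda^T\mathbf{x}_j)^T(\mathbf{X}_\Lambda^T\mathbf{X}_\Lambda)^{-1}\mathbf{w}_\Lambda,
\]
and the best coherence-based bound on the correction is of order $\dfrac{S_x\mu_x}{1-(S_x-1)\mu_x}\max_{i\in\Lambda}w_i$. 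The hypothesis $S_x<0.5(1+1/\mu_x)$ makes that fraction less than $1$, but you must compare it to $w_j$, not to $\max_{i\in\Lambda}w_i$; the ratio $w_j/\max_i w_i$ is uncontrolled, so the inequality fails for some $j$ in general. Any certificate that is built by projecting $\mathbf{h}$ will carry this $w$-dependence, which is precisely what produces the weaker $\sigma_u$-threshold of \cite{Bruckstein2008a}.

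The paper avoids this by decoupling the two hypotheses. It never builds a face certificate out of $\mathbf{h}$. Instead it invokes the known fact (Donoho) that $S_x<0.5(1+1/\mu_x)$ makes the cross-polytope image $\mathbf{X}\mathcal{C}^{K_x}$ centrally $S_x$-neighborly; this is a statement about $\mathbf{X}$ alone, and the implicit dual certificate there is the standard BP one, with no reference to $\mathbf{h}$ or $\mathbf{w}$. Since the vertices of $\mathcal{T}^{K_x}_{\mathbf{0}}\setminus\{\mathbf{0}\}$ sit among those of $\mathcal{C}^{K_x}$, central $S_x$-neighborliness of $\mathbf{X}\mathcal{C}^{K_x}$ immediately gives outward $S_x$-neighborliness of $\mathbf{X}\mathcal{T}^{K_x}_{\mathbf{0}}$. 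Only then is $\mathcal{M}^+$ used, in a separate lemma: outward neighborliness plus $\mathbf{X}\in\mathcal{M}^+$ yields a separating hyperplane between the cone on the chosen vertices and the cone on the remaining ones, and that separation forces the feasible set to be a singleton. So the fix for your argument is not to sharpen the estimate on $(\mathbf{I}-\mathbf{P}_\Lambda)\mathbf{g}_0$, but to replace it entirely by the face certificate coming from central neighborliness of $\mathbf{X}\mathcal{C}^{K_x}$, and to reserve $\mathbf{h}$ for the final cone-separation step.
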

The threshold given in the above theorem is better than that of \cite[Theorem 2]{Bruckstein2008a}, because $\mu_x = \mu_u$, $\mu_u \leq \sigma_u$ and hence $\mu_x \leq \sigma_u$. We are able to improve the threshold by resorting to geometric arguments based on the theory of polytopes. The rest of this section will state and prove lemmas that will be used in the proof of our main result. 

We will define three geometric entities, the cross-polytope $\mathcal{C}^{K_x}$, the simplex $\mathcal{T}^{K_x-1}$ and the positive orthant $\mathbb{R}^{K_x}_{+}$, that will be used in the proof. The cross-polytope is defined as the $\ell_1$ ball, $\|\boldsymbol{\alpha}\|_1 \leq 1$, in $\mathbb{R}^{K_x}$, and $\mathcal{T}^{K_x-1}$ is the standard simplex, the convex hull of unit basis vectors. Any general sparse representation with $S_x$ non-zero coefficients can be successfully recovered using $\ell_1$ minimization (BP), if the quotient polytope $\mathbf{X} \mathcal{C}^{K_x}$ is \textit{centrally $S_x-$neighborly} \cite[Theorem 1]{Donoho2004}. This form of neighborliness implies that any set of $S_x$ vertices of $\mathbf{X} \mathcal{C}^{K_x}$, not including an antipodal pair (pair of $\pm \mathbf{x}_i$), span a face. For unique recovery of non-negative $S_x-$sparse vectors using the linear program given in (\ref{eqn:l1_min}), the condition on the quotient polytope $\mathbf{X} \mathcal{T}$ is that it must be \textit{outwardly $S_x$-neighborly} \cite[Theorem 1]{DonohoNN}. Here, we fix $\mathcal{T} =  \mathcal{T}^{K_x-1}$ if $\mathbf{0}$ can be expressed as a convex combination of the columns of $\mathbf{X}$, else we fix  $\mathcal{T} =  \mathcal{T}^{K_x}_{\mathbf{0}}$ where $\mathcal{T}^{K_x}_{\mathbf{0}}$ is the solid simplex, the convex hull of $\mathcal{T}^{K_x-1}$ and the origin. When every set of $S_x$ vertices, not including the origin, span a face, the quotient polytope is said to be outwardly $S_x$-neighborly.

\begin{lemma}
\label{lem:non_negrec_neighb}
When $\mathbf{X} \in \mathcal{M}^+$ and the number of non-zero coefficients in $\boldsymbol{\alpha}$ is $S_x$, the set defined in (\ref{eqn:nn_rec_uniq}) is singleton if the quotient polytope $\mathbf{X} \mathcal{T}^{K_x}_{\mathbf{0}}$ is outwardly $S_x-$neighborly. 
\end{lemma}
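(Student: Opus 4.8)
The plan is to reduce the claim, via the membership $\mathbf{X}\in\mathcal{M}^+$, to a statement about uniqueness of convex representations on the quotient polytope, for which outward $S_x$-neighborliness is exactly the right hypothesis. First I would dispose of the degenerate case $\mathbf{y}=\mathbf{0}$: picking $\mathbf{h}$ with $\mathbf{h}^T\mathbf{X}=\mathbf{w}^T>\mathbf{0}$, any feasible $\boldsymbol{\alpha}$ satisfies $\mathbf{w}^T\boldsymbol{\alpha}=\mathbf{h}^T\mathbf{y}=0$ with $\mathbf{w}>\mathbf{0}$, $\boldsymbol{\alpha}\geq\mathbf{0}$, so $\boldsymbol{\alpha}=\mathbf{0}$ and the set is $\{\mathbf{0}\}$. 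Hence assume $\mathbf{y}\neq\mathbf{0}$ and let $\boldsymbol{\alpha}^0\geq\mathbf{0}$, with support $\Lambda$, $|\Lambda|=S_x$, be a solution.

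Next I would use the $\mathcal{M}^+$ property to place all atoms on a common hyperplane. With $\mathbf{W}=diag(\mathbf{w})$ and $\mathbf{U}=\mathbf{X}\mathbf{W}^{-1}$, the columns of $\mathbf{U}$ all lie on $\{\mathbf{z}:\mathbf{h}^T\mathbf{z}=1\}$, and $\boldsymbol{\gamma}=\mathbf{W}\boldsymbol{\alpha}$ is a support-preserving bijection between $\{\boldsymbol{\alpha}\geq\mathbf{0}:\mathbf{X}\boldsymbol{\alpha}=\mathbf{y}\}$ and $\{\boldsymbol{\gamma}\geq\mathbf{0}:\mathbf{U}\boldsymbol{\gamma}=\mathbf{y}\}$. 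Every such $\boldsymbol{\gamma}$ satisfies $\mathbf{1}^T\boldsymbol{\gamma}=\mathbf{h}^T\mathbf{U}\boldsymbol{\gamma}=\mathbf{h}^T\mathbf{y}=:\rho$, which is strictly positive since $\boldsymbol{\gamma}\geq\mathbf{0}$ is nonzero, so $\rho^{-1}\boldsymbol{\gamma}$ is a vector of convex weights expressing $\mathbf{p}:=\rho^{-1}\mathbf{y}$ as a point of $\mathbf{U}\mathcal{T}^{K_x-1}=\mathrm{conv}\{\mathbf{u}_1,\dots,\mathbf{u}_{K_x}\}$. Therefore ``(\ref{eqn:nn_rec_uniq}) is a singleton'' is equivalent to ``$\mathbf{p}$ has a unique representation as a convex combination of the $\mathbf{u}_i$''. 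This is precisely where $\mathbf{X}\in\mathcal{M}^+$ is used: it makes the nonnegative cone of the atoms coincide, after scaling, with their convex hull, so nonnegative and convex representations become the same object; it also forces $\mathbf{0}\notin\mathrm{conv}\{\mathbf{x}_i\}$, which is why the relevant quotient polytope is the solid simplex $\mathbf{X}\mathcal{T}^{K_x}_{\mathbf{0}}$ — its supporting hyperplane $\{\mathbf{h}^T\mathbf{z}=1\}$ exposes $\mathbf{U}\mathcal{T}^{K_x-1}$ as a facet, so the faces of $\mathbf{U}\mathcal{T}^{K_x}_{\mathbf{0}}$ not containing the origin are exactly the faces of $\mathbf{U}\mathcal{T}^{K_x-1}$.

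With this reduction I would close the argument using the face structure. Since $\rho^{-1}\boldsymbol{\gamma}^0$ is supported exactly on $\Lambda$ and positive there, $\mathbf{p}$ lies in the relative interior of $F:=\mathrm{conv}\{\mathbf{u}_i:i\in\Lambda\}$. Outward $S_x$-neighborliness of the quotient polytope says exactly that the $S_x$ atoms indexed by $\Lambda$ span a face $F$ of $\mathbf{U}\mathcal{T}^{K_x-1}$ and that $F$ is an $(S_x-1)$-simplex, i.e., these atoms are affinely independent. Because every face of a polytope is exposed, any convex representation of $\mathbf{p}$ over the vertices of $\mathbf{U}\mathcal{T}^{K_x-1}$ must be supported on the vertices of $F$, that is, on $\Lambda$; and since $F$ is a simplex, that representation is unique. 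Pulling back through $\boldsymbol{\gamma}\mapsto\mathbf{W}^{-1}\boldsymbol{\gamma}$ shows $\boldsymbol{\alpha}^0$ is the unique element of (\ref{eqn:nn_rec_uniq}). Equivalently, once the $\mathcal{M}^+$-normalization has placed us in the standard nonnegative linear-programming setting, one may simply invoke \cite[Theorem 1]{DonohoNN} to pass from outward $S_x$-neighborliness to unique recovery of every $S_x$-sparse nonnegative vector, hence to the singleton claim.

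The main obstacle I anticipate is the simpliciality built into the last step: it is not enough for the $S_x$ support atoms merely to share a face; they must be affinely independent, since any affine dependence among $\{\mathbf{u}_i\}_{i\in\Lambda}$ would let me perturb the weights of $\mathbf{p}$ and produce a continuum of representations, destroying the singleton property. So the crux is to use ``outwardly $S_x$-neighborly'' in the strong sense (every $S_x$ atoms spanning a \emph{simplex} face), which is the sense in which it characterizes unique nonnegative recovery in \cite{DonohoNN}. The remaining items — the support-preserving bijection, the positivity of $\rho$, and the identification of the non-origin faces of $\mathbf{U}\mathcal{T}^{K_x}_{\mathbf{0}}$ with those of $\mathbf{U}\mathcal{T}^{K_x-1}$ — are routine.
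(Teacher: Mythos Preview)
Your proposal is correct and shares the paper's opening move --- use $\mathbf{X}\in\mathcal{M}^+$ to pass to $\mathbf{U}=\mathbf{X}\mathbf{W}^{-1}$, so that every nonnegative representation becomes (after dividing by $\rho=\mathbf{h}^T\mathbf{y}$) a convex representation of a single point $\mathbf{p}$ in the quotient polytope, and outward $S_x$-neighborliness transfers from $\mathbf{X}\mathcal{T}^{K_x}_{\mathbf{0}}$ to $\mathbf{U}\mathcal{T}^{K_x}_{\mathbf{0}}$. From that point on, however, the two arguments diverge. The paper does not argue uniqueness directly from the simplex structure of the face; instead it separates cones: with $\mathcal{V}$ the vertex set of the simplicial face $F$ through $\hat{\mathbf{y}}$ and $\mathcal{V}^c$ the remaining vertices, it uses the supporting functional of $F$ to show that the cones $\mathbf{U}_{\mathcal{V}}\mathbb{R}_+^{|\mathcal{V}|}$ and $\mathbf{U}_{\mathcal{V}^c}\mathbb{R}_+^{|\mathcal{V}^c|}$ have disjoint relative interiors, invokes a hyperplane separation result \cite[Theorem~1.32]{Berman2003}, and then appeals to \cite[Prop.~1]{Slawski2010a} to conclude that the feasible set is a singleton. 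Your route is more elementary and self-contained: once $\mathbf{p}$ sits in the relative interior of a face $F$, the standard fact that any convex representation of a point of a face is supported on that face forces every representation onto $\Lambda$, and simpliciality of $F$ (which you rightly flag as the one substantive hypothesis to insist on) then gives uniqueness of barycentric coordinates. What your approach buys is that it avoids the two external references and the cone-separation detour; what the paper's approach buys is that it makes explicit the separating-hyperplane certificate, which connects cleanly to the NNLS-type uniqueness criteria in \cite{Slawski2010a}. Your alternative of invoking \cite[Theorem~1]{DonohoNN} after the $\mathcal{M}^+$-normalization is also sound and is the shortest path of all.
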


\begin{proof}
By assumption, $\exists \mathbf{h}$ such that $\mathbf{h}^T\mathbf{X} = \mathbf{w}^T$, $\mathbf{w} > \mathbf{0}$. Consider the quotient polytope $\mathbf{U}\mathcal{T}^{K_x}_{\mathbf{0}}$, where $\mathbf{U} = \mathbf{X}\mathbf{W}^{-1}$ and $\mathbf{W} = diag(\mathbf{w})$. Since $\mathbf{X} \mathcal{T}^{K_x}_{\mathbf{0}}$ is outwardly $S_x-$neighborly and the positive scaling of vertices does not affect the neighborliness of a polytope,  $\mathbf{U}\mathcal{T}^{K_x}_{\mathbf{0}}$ is also outwardly $S_x-$neighborly. Now denote $\hat{\mathbf{y}} = \mathbf{y}/(\mathbf{h}^T \mathbf{y})$ and $\boldsymbol{\gamma} = \mathbf{W}\boldsymbol{\alpha}/(\mathbf{h}^T\hat{\mathbf{y}})$. The set defined in (\ref{eqn:nn_rec_uniq}) has a one-to-one correspondence with 
\begin{equation}
\label{eqn:nn_rec_uniq_mod}
\{\boldsymbol{\gamma} | \hat{\mathbf{y}} = \mathbf{U}\boldsymbol{\gamma}, \boldsymbol{\gamma} \geq 0 \},
\end{equation} If we show that (\ref{eqn:nn_rec_uniq_mod}) is singleton, then (\ref{eqn:nn_rec_uniq}) is singleton as well.

Since we know that $\|\boldsymbol{\alpha}\|_0 = S_x$, this implies that $\|\boldsymbol{\gamma}\|_0 = S_x$. Because of the neighborliness of the quotient polytope $\mathbf{U}\mathcal{T}^{K_x}_{\mathbf{0}}$, $\hat{\mathbf{y}}$ lies in its simplicial face $F$ of affine dimension $S_x$. Denote $\mathcal{V}$ to be the set of vertices of $F$. The remaining vertices in the quotient polytope are denoted by the set $\mathcal{V}^c$. Consider an arbitrary vector $\hat{\mathbf{y}}^c$ expressed as a convex combination of the vertices $\mathcal{V}^c$. Since $F$ is a face, there exists a linear functional $\lambda_F$ and a constant $c$ such that $\lambda_F^T\hat{\mathbf{y}} = c$ and $\lambda_F^T\hat{\mathbf{y}}^c < c$ \cite{Donoho2004}. This means that for an arbitrarily chosen $\hat{\mathbf{y}}$ and $\hat{\mathbf{y}}^c$ which are convex combinations of vertices $\mathcal{V}$ and $\mathcal{V}^c$ respectively, $\|\hat{\mathbf{y}}-\hat{\mathbf{y}^c}\|_2 > 0$. By extension, the rays in the directions of $\hat{\mathbf{y}}$ and $\hat{\mathbf{y}^c}$ intersect only at the origin. The convex cones formed by the vertices $\mathcal{V}$ and $\mathcal{V}^c$ are denoted as $\mathbf{U}_{\mathcal{V}} \mathbb{R}^{|\mathcal{V}|}_{+}$ and $\mathbf{U}_{\mathcal{V}^c} \mathbb{R}^{|\mathcal{V}^c|}_{+}$ respectively. Since  $\|\hat{\mathbf{y}}-\hat{\mathbf{y}^c}\|_2 > 0$ is true for arbitrary pairs of $\hat{\mathbf{y}}$ and $\hat{\mathbf{y}}^c$, the relative interiors of $\mathbf{U}_{\mathcal{V}} \mathbb{R}^{|\mathcal{V}|}_{+}$ and $\mathbf{U}_{\mathcal{V}^c} \mathbb{R}^{|\mathcal{V}^c|}_{+}$ are disjoint. Therefore, from \cite[Theorem 1.32]{Berman2003}, there exists a hyperplane passing through the origin that separates the cones properly. From \cite[Prop. 1]{Slawski2010a}, the existence of such a hyperplane is sufficient for (\ref{eqn:nn_rec_uniq_mod}), and by extension (\ref{eqn:nn_rec_uniq}), to be singleton.
\end{proof}

\subsection{Proof of Theorem \ref{thm:non_negrec_coh}}
\label{sec:pf_thm_non_negrec_coh}
If $S_x < 0.5(1+1/\mu_x)$, the quotient polytope $\mathbf{X} \mathcal{C}^{K_x}$ is centrally $S_x-$neighborly \cite[Corollary 1.1]{Donoho2004}. Since  $\text{vertices}(\mathcal{T}^{K_x}_{\mathbf{0}}) -\{\mathbf{0}\} \subset \text{vertices}(\mathcal{C}^{K_x})$, central $S_x-$neighborliness of $\mathbf{X} \mathcal{C}^{K_x}$ implies outward $S_x-$neighborliness of $\mathbf{X} \mathcal{T}^{K_x}_{\mathbf{0}}$. Note that the vertex $\mathbf{0}$ will be neglected when considering the outward neighborliness. Combining the assumption that $\mathbf{X} \in \mathcal{M}^+$, from Lemma \ref{lem:non_negrec_neighb}, the set defined in (\ref{eqn:nn_rec_uniq}) is singleton.

\section{Combined Sparse Representations}
\label{sec:non_neg_sparse_corr}
We now turn to investigate the problem of combined sparse representations, where a part of the coefficient vector is constrained to be non-negative. For the combined representation model given in (\ref{eqn:nonneg_rep_sparse_corr}), the number of non-zero coefficients and the coefficient support for $\boldsymbol{\alpha}$ are given by $S_x$ and $\mathcal{X}$ respectively. For $\boldsymbol{\beta}$, they are respectively denoted as $S_d$ and $\mathcal{D}$. Let us define the combined representation vector $\boldsymbol{\delta} = [\boldsymbol{\alpha}^T \text{ } \boldsymbol{\beta}^T]^T$ and the combined dictionary $\mathbf{G} = [\mathbf{X} \quad \mathbf{D}]$. The set $\mathcal{G}$ indexes the non-zero coefficients in $\boldsymbol{\delta}$. The length of $\boldsymbol{\delta}$ is denoted by $K_g$ and its number of non-zero coefficients is referred to as $S_g$. We will refer to the coefficient vector  $\boldsymbol{\delta}$ as the combined representation, since it contains both non-negative and general entries from the coefficient vectors $\boldsymbol{\alpha}$ and $\boldsymbol{\beta}$ respectively. We will define the cross-coherence between the matrices $\mathbf{X}$ and $\mathbf{D}$ as
\begin{equation}
\label{eqn:cross_coh}
\mu_g = \max_{i,j} \frac{|\mathbf{x}_i^T \mathbf{d}_j|}{\|\mathbf{x}_i\|_2 \|\mathbf{d}_j\|_2}.
\end{equation} We will present deterministic sparsity thresholds for recovery of the $ML_0$ solution of  (\ref{eqn:nonneg_rep_sparse_corr}) when the coefficient supports are unknown as well as partially known.

\subsection{Non-Zero Supports of $\boldsymbol{\alpha}$ and $\boldsymbol{\beta}$ Known}
\label{sec:both_supp_known}
The vectors $\boldsymbol{\alpha}_1 \in \mathbb{R}^{S_x}$ and $\boldsymbol{\beta}_1 \in \mathbb{R}^{S_d}$ contain the non-zero coefficients of $\boldsymbol{\alpha}$ and $\boldsymbol{\beta}$ indexed by $\mathcal{X}$ and $\mathcal{D}$ respectively. The matrices $\mathbf{X}_1$ and $\mathbf{D}_1$ contain the columns of $\mathbf{X}$ and $\mathbf{D}$ indexed by the sets $\mathcal{X}$ and $\mathcal{D}$ respectively. Since the coefficient supports are known, we can express (\ref{eqn:nonneg_rep_sparse_corr}) as
\begin{equation}
\label{eqn:y_both_supp_known}
\mathbf{y} = \mathbf{X}_1 \boldsymbol{\alpha}_1+\mathbf{D}_1\boldsymbol{\beta}_1,
\end{equation} where $\boldsymbol{\alpha}_1 \geq 0$. We define $\boldsymbol{\delta}_1 = [\boldsymbol{\alpha}_1^T \quad \boldsymbol{\beta}_1^T]^T$ and the matrix $\mathbf{G}_1 = [\mathbf{X}_1 \quad \mathbf{D}_1]$. Recovery can be performed using least squares with inequality constraints (LSI) \cite[Chap. 23]{Lawson1974} as
\begin{equation}
\label{eqn:comb_rec_LSI}
\min_{\boldsymbol{\delta}_1} \|\mathbf{y} - \mathbf{G}_1 \boldsymbol{\delta}_1\|_2 \text{ subject to } \text{ } \mathbf{I}_{\mathcal{X}} \boldsymbol{\delta}_1 \geq \mathbf{0},
\end{equation} where $\mathbf{I}_{\mathcal{X}} = [\mathbf{I}_{S_x} \quad \mathbf{0}_{S_x,S_g}]$ is the indicator matrix such the constraints $\mathbf{I}_{\mathcal{X}} \boldsymbol{\delta}_1 \geq \mathbf{0}$ and $\boldsymbol{\alpha}_1 \geq \mathbf{0}$ are equivalent.

If the matrix $\mathbf{G}_1$ has full column rank, $\boldsymbol{\delta}_1$ can be estimated by just using least squares (LS) instead of LSI, as the additional constraint in (\ref{eqn:comb_rec_LSI}) will not impact the solution. The following theorem presents a sufficient condition for $\mathbf{G}_1$ to be of full column rank.
\begin{theorem} (\cite{Studer2011,Kuppinger2011})
\label{thm:comb_rec_known}
For the system defined in (\ref{eqn:y_both_supp_known}), the matrix $\mathbf{G}_1 = [\mathbf{X}_1 \text{ } \mathbf{D}_1]$ has full column rank if 
\begin{equation}
S_x S_d < \frac{[1-\mu_x(S_x-1)]^{+}[1-\mu_d(S_d-1)]^{+}}{\mu_g^2}.
\end{equation}
\end{theorem}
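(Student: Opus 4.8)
The plan is to show that the null space of $\mathbf{G}_1 = [\mathbf{X}_1 \ \mathbf{D}_1]$ is trivial, or equivalently that the Gram matrix
\[
\mathbf{G}_1^T \mathbf{G}_1 = \begin{bmatrix} \mathbf{X}_1^T \mathbf{X}_1 & \mathbf{X}_1^T \mathbf{D}_1 \\ \mathbf{D}_1^T \mathbf{X}_1 & \mathbf{D}_1^T \mathbf{D}_1 \end{bmatrix}
\]
is positive definite. The two ingredients are Gershgorin-type lower bounds on the diagonal blocks and an operator-norm upper bound on the off-diagonal block, both expressed through the coherence parameters $\mu_x$, $\mu_d$, $\mu_g$.

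First I would dispose of the degenerate case: if either $[1-\mu_x(S_x-1)]^{+}$ or $[1-\mu_d(S_d-1)]^{+}$ vanishes, the right-hand side of the hypothesis is $0$ while $S_x S_d \geq 1$, so the hypothesis is vacuous; hence we may assume both quantities are strictly positive, i.e.\ $\mu_x(S_x-1) < 1$ and $\mu_d(S_d-1) < 1$. Next come the key estimates. Since the columns of $\mathbf{X}$ have unit $\ell_2$ norm and pairwise inner products of magnitude at most $\mu_x$, applying Gershgorin's circle theorem to $\mathbf{X}_1^T\mathbf{X}_1$ (all diagonal entries $1$, at most $S_x-1$ off-diagonal entries per row, each bounded by $\mu_x$) gives $\lambda_{\min}(\mathbf{X}_1^T\mathbf{X}_1) \geq 1 - \mu_x(S_x-1) > 0$, and likewise $\lambda_{\min}(\mathbf{D}_1^T\mathbf{D}_1) \geq 1 - \mu_d(S_d-1) > 0$. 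Every entry of the $S_x \times S_d$ matrix $\mathbf{X}_1^T\mathbf{D}_1$ is bounded in magnitude by $\mu_g$, so $\|\mathbf{X}_1^T\mathbf{D}_1\|_{1,1} \leq S_x \mu_g$ and $\|\mathbf{X}_1^T\mathbf{D}_1\|_{\infty,\infty} \leq S_d \mu_g$, whence $\|\mathbf{X}_1^T\mathbf{D}_1\|_2 \leq \sqrt{\|\mathbf{X}_1^T\mathbf{D}_1\|_{1,1}\,\|\mathbf{X}_1^T\mathbf{D}_1\|_{\infty,\infty}} \leq \sqrt{S_x S_d}\,\mu_g$.

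I would then conclude with a null-vector computation. Suppose $\mathbf{X}_1\boldsymbol{\alpha}_1 + \mathbf{D}_1\boldsymbol{\beta}_1 = \mathbf{0}$, and set $a = \|\boldsymbol{\alpha}_1\|_2$, $b = \|\boldsymbol{\beta}_1\|_2$. Taking the inner product of $\mathbf{X}_1\boldsymbol{\alpha}_1 = -\mathbf{D}_1\boldsymbol{\beta}_1$ with $\mathbf{X}_1\boldsymbol{\alpha}_1$ and with $\mathbf{D}_1\boldsymbol{\beta}_1$ gives $\|\mathbf{X}_1\boldsymbol{\alpha}_1\|_2^2 = \|\mathbf{D}_1\boldsymbol{\beta}_1\|_2^2 = -\boldsymbol{\alpha}_1^T\mathbf{X}_1^T\mathbf{D}_1\boldsymbol{\beta}_1$. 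Combining with the estimates above, $[1-\mu_x(S_x-1)]\,a^2 \leq \|\mathbf{X}_1\boldsymbol{\alpha}_1\|_2^2 \leq \sqrt{S_x S_d}\,\mu_g\,a b$ and, symmetrically, $[1-\mu_d(S_d-1)]\,b^2 \leq \sqrt{S_x S_d}\,\mu_g\,a b$. If $a, b > 0$, dividing by $ab$ and multiplying the two inequalities yields $[1-\mu_x(S_x-1)][1-\mu_d(S_d-1)] \leq S_x S_d\,\mu_g^2$, contradicting the hypothesis; if $a = 0$ (the case $b = 0$ being symmetric), then $\mathbf{D}_1\boldsymbol{\beta}_1 = \mathbf{0}$ forces $[1-\mu_d(S_d-1)]\,b^2 \leq 0$, hence $b = 0$. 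Therefore $\boldsymbol{\delta}_1 = \mathbf{0}$ and $\mathbf{G}_1$ has full column rank.

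I expect the main obstacle to be the off-diagonal bound $\|\mathbf{X}_1^T\mathbf{D}_1\|_2 \leq \sqrt{S_x S_d}\,\mu_g$: it is precisely this (deliberately crude) Frobenius-type estimate, rather than a per-block coherence bound, that makes the \emph{product} of the two diagonal margins $[1-\mu_x(S_x-1)]^{+}$ and $[1-\mu_d(S_d-1)]^{+}$ appear and produces the $\mu_g^2$ in the denominator, and one should check that no tighter choice is available without extra assumptions. The remaining pieces (Gershgorin on the diagonal blocks, the $[\,\cdot\,]^{+}$ bookkeeping, and the final division/AM--GM step) are routine. The same conclusion can also be reached via a Schur-complement argument: $\mathbf{G}_1^T\mathbf{G}_1$ is positive definite iff $\mathbf{X}_1^T\mathbf{X}_1$ is and the Schur complement $\mathbf{D}_1^T\mathbf{D}_1 - \mathbf{D}_1^T\mathbf{X}_1(\mathbf{X}_1^T\mathbf{X}_1)^{-1}\mathbf{X}_1^T\mathbf{D}_1$ is, the latter being bounded below in the spectral sense by $[1-\mu_d(S_d-1)]^{+} - S_x S_d \mu_g^2 / [1-\mu_x(S_x-1)]^{+}$.
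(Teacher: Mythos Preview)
The paper does not actually prove this theorem: it is quoted verbatim as a result from \cite{Studer2011,Kuppinger2011} and no argument is supplied. Your proposal is therefore not being compared against an in-paper proof but stands on its own, and it is correct. The three ingredients you use --- the Gershgorin lower bounds $\lambda_{\min}(\mathbf{X}_1^T\mathbf{X}_1)\geq 1-\mu_x(S_x-1)$ and $\lambda_{\min}(\mathbf{D}_1^T\mathbf{D}_1)\geq 1-\mu_d(S_d-1)$, the interpolation bound $\|\mathbf{X}_1^T\mathbf{D}_1\|_2\leq\sqrt{\|\cdot\|_{1,1}\|\cdot\|_{\infty,\infty}}\leq\sqrt{S_xS_d}\,\mu_g$, and the null-vector elimination via multiplying the two resulting inequalities --- are exactly the standard route taken in the cited references, and your alternative Schur-complement formulation is equivalent. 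The degenerate-case bookkeeping and the $a=0$ / $b=0$ branches are handled cleanly.
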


\subsection{Non-Zero Support of $\boldsymbol{\beta}$ Alone Known}
\label{sec:beta_supp_known}
We will now consider the case where the non-zero support of $\boldsymbol{\beta}$ given by the set $\mathcal{D}$ is known for the system in (\ref{eqn:nonneg_rep_sparse_corr}). We will derive conditions for unique recovery of $\boldsymbol{\alpha}$ using NN-BP and NNLS. With the knowledge of non-zero support of $\boldsymbol{\beta}$, we can rewrite (\ref{eqn:nonneg_rep_sparse_corr}) as
\begin{equation}
\label{eqn:y_beta_supp_known}
\mathbf{y} = \mathbf{X} \boldsymbol{\alpha} + \mathbf{D}_1 \boldsymbol{\beta}_1.
\end{equation} Define $\mathbf{P}_\mathcal{D}$ to be the projection matrix for the subspace orthogonal to the column space of $\mathbf{D}_1$, i.e.,
\begin{equation}
\label{eqn:PD_defn}
 \mathbf{P}_\mathcal{D} = \mathbf{I}_M - \mathbf{D}_1 \mathbf{D}_1^{\dagger}.
\end{equation}
Premultiplying (\ref{eqn:y_beta_supp_known}) with $\mathbf{P}_\mathcal{D}$, we get
\begin{equation}
\label{eqn:nn_rep_sp_err_red}
\mathbf{P}_\mathcal{D} \mathbf{y} = \mathbf{P}_\mathcal{D} \mathbf{X} \boldsymbol{\alpha} \text{   where  } \boldsymbol{\alpha} \geq 0.
\end{equation} Let us define $\tilde{\mathbf{y}} = \mathbf{P}_\mathcal{D} \mathbf{y}$  and $\tilde{\mathbf{X}} = \mathbf{P}_\mathcal{D} \mathbf{X}$, such that (\ref{eqn:nn_rep_sp_err_red}) becomes
\begin{equation}
\label{eqn:nn_rep_sp_err_red1}
\tilde{\mathbf{y}} = \tilde{\mathbf{X}} \boldsymbol{\alpha} \text{   where  } \boldsymbol{\alpha} \geq 0.
\end{equation}
 The condition for recovery of the unique solution $\boldsymbol{\alpha}$ from (\ref{eqn:nn_rep_sp_err_red1}) using NN-BP is
\begin{align}
\label{eqn:cond2_uniq_sol_nn_rep_sp_err}
S_x < 0.5\left( 1+\frac{1}{\mu_{\tilde{x}}}\right),
\end{align}where $\mu_{\tilde{x}}$ is the coherence of $\tilde{\mathbf{X}}$. 

\begin{lemma} (\cite{Studer2011})
\label{lem:coh_red_beta_known}
The coherence of $\tilde{\mathbf{X}}$, given by $\mu_{\tilde{x}}$ can be upper bounded as
\begin{align}
\label{eqn:coh_red_beta_known}
\mu_{\tilde{x}} \leq 0.5 \left(\frac{[1-\mu_d(S_d-1)]^{+}(1+\mu_x)}{\mu_x[1-\mu_d(S_d-1)]^{+}+S_b \mu_g^2}\right).
\end{align}
\end{lemma}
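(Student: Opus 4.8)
The plan is to analyze the Gram structure that the orthogonal projector $\mathbf{P}_\mathcal{D}$ induces on the active columns of $\mathbf{X}$, and then compress the resulting ratio into the stated closed form. Since $\mathbf{P}_\mathcal{D} = \mathbf{I}_M - \mathbf{D}_1 \mathbf{D}_1^{\dagger}$ is symmetric and idempotent, the $i$-th projected column $\tilde{\mathbf{x}}_i = \mathbf{P}_\mathcal{D}\mathbf{x}_i$ obeys $\tilde{\mathbf{x}}_i^T \tilde{\mathbf{x}}_j = \mathbf{x}_i^T \mathbf{P}_\mathcal{D} \mathbf{x}_j = \mathbf{x}_i^T \mathbf{x}_j - \mathbf{g}_i^T (\mathbf{D}_1^T \mathbf{D}_1)^{-1} \mathbf{g}_j$, where $\mathbf{g}_i = \mathbf{D}_1^T \mathbf{x}_i$, and in particular $\|\tilde{\mathbf{x}}_i\|_2^2 = 1 - \mathbf{g}_i^T (\mathbf{D}_1^T \mathbf{D}_1)^{-1} \mathbf{g}_i$. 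First I would use these two identities to reduce the bound on $\mu_{\tilde{x}}$ to separate control of the off-diagonal ($i \neq j$) and diagonal entries of this projected Gram matrix.

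Next I would assemble three elementary estimates. Applying Gershgorin's theorem to $\mathbf{D}_1^T \mathbf{D}_1$, whose diagonal is unit and whose off-diagonal entries are bounded by $\mu_d$ in magnitude over $S_d$ columns, gives $\lambda_{\min}(\mathbf{D}_1^T\mathbf{D}_1) \geq c := [1-\mu_d(S_d-1)]^{+}$, so that $\|(\mathbf{D}_1^T\mathbf{D}_1)^{-1}\|_2 \leq 1/c$ as long as $c > 0$ (which also certifies the full column rank of $\mathbf{D}_1$ needed for $\mathbf{D}_1^{\dagger}$). Each entry of $\mathbf{g}_i$ is an inner product $\mathbf{x}_i^T \mathbf{d}_k$ bounded by $\mu_g$, so $\|\mathbf{g}_i\|_2^2 \leq S_b \mu_g^2$, where $S_b$ is the number of active $\mathbf{D}$-columns ($S_b = S_d$). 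Finally $|\mathbf{x}_i^T\mathbf{x}_j| \leq \mu_x$ for $i \neq j$. Feeding these into the two identities, Cauchy--Schwarz yields the off-diagonal bound $|\tilde{\mathbf{x}}_i^T\tilde{\mathbf{x}}_j| \leq \mu_x + S_b\mu_g^2/c = p/c$ with $p := \mu_x c + S_b\mu_g^2$, while nonnegativity of the projected quadratic form together with its upper bound $S_b\mu_g^2/c$ gives the diagonal lower bound $\|\tilde{\mathbf{x}}_i\|_2^2 \geq q/c$ with $q := c - S_b\mu_g^2$. Dividing the two produces the sharp intermediate bound $\mu_{\tilde{x}} \leq p/q$.

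The last step is purely algebraic: the identity $p + q = \mu_x c + c = c(1+\mu_x)$ shows that the claimed right-hand side is exactly $(p+q)/(2p)$, and the elementary factorization $(2p+q)(p-q) \leq 0$ rearranges to $p/q \leq (p+q)/(2p)$. Chaining this with $\mu_{\tilde{x}} \leq p/q$ delivers the stated inequality $\mu_{\tilde{x}} \leq c(1+\mu_x)/(2p)$.

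The step I expect to be the real subtlety --- the main obstacle --- is that $(2p+q)(p-q)\leq 0$ holds only when $p \leq q$, i.e. when $2S_b\mu_g^2 \leq c(1-\mu_x)$, which is precisely the regime in which the claimed right-hand side does not exceed one. Outside that regime the stated upper bound on $\mu_{\tilde{x}}$ can become vacuous (or even fail), and the genuinely unconditional statement is the reciprocal one: taking reciprocals in $\mu_{\tilde{x}} \leq p/q$ gives $\tfrac12(1+1/\mu_{\tilde{x}}) \geq (p+q)/(2p)$, so the recovery threshold of (\ref{eqn:cond2_uniq_sol_nn_rep_sp_err}) is bounded below by the quantity on the right of (\ref{eqn:coh_red_beta_known}) for all parameter values. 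I would therefore either state the regime $p \leq q$ explicitly alongside (\ref{eqn:coh_red_beta_known}), or phrase the conclusion directly as the threshold lower bound, which is how the estimate is actually used. One must also retain $c>0$ throughout; the $[\,\cdot\,]^{+}$ serves only as a safeguard when $\mu_d(S_d-1)\geq 1$, where the bound carries no content.
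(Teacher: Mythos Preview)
The paper does not actually supply a proof of this lemma; it merely records that the bound ``follows directly from \cite[Theorem~5]{Studer2011}.'' Your proposal therefore goes further than the paper, giving a self-contained derivation via the projected Gram entries $\tilde{\mathbf{x}}_i^T\tilde{\mathbf{x}}_j=\mathbf{x}_i^T\mathbf{x}_j-\mathbf{g}_i^T(\mathbf{D}_1^T\mathbf{D}_1)^{-1}\mathbf{g}_j$ and the Gershgorin bound on $(\mathbf{D}_1^T\mathbf{D}_1)^{-1}$. That derivation is correct and yields the clean intermediate inequality $\mu_{\tilde{x}}\leq p/q$ with $p=\mu_x c+S_b\mu_g^2$, $q=c-S_b\mu_g^2$, $c=[1-\mu_d(S_d-1)]^{+}$, which is exactly the route one finds in \cite{Studer2011}.

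You are also right to flag the final algebraic step: $p/q\leq (p+q)/(2p)$ is equivalent to $p\leq q$ and so does \emph{not} hold unconditionally, whereas the reciprocal statement $\tfrac12\bigl(1+1/\mu_{\tilde x}\bigr)\geq (p+q)/(2p)$ follows directly from $\mu_{\tilde{x}}\leq p/q$ in every regime and is precisely what the paper needs for the threshold (\ref{eqn:cond2_uniq_sol_nn_rep_sp_err}). The displayed inequality (\ref{eqn:coh_red_beta_known}) is almost certainly a transcription slip of that threshold bound (indeed, \cite[Theorem~5]{Studer2011} states the result in the threshold form, not as a coherence upper bound). Your suggested rephrasing---either adding the hypothesis $p\leq q$ or, better, stating the conclusion as a lower bound on $\tfrac12(1+1/\mu_{\tilde x})$---is the correct reading.
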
 The above lemma follows directly from \cite[Theorem 5]{Studer2011}. This also implies that for the existence of $\mathbf{D}_1^{\dagger}$, we need to have $S_d < 1+1/\mu_d$.

\begin{lemma}
\label{lem:necc_cond_beta_known}
Let 
\begin{equation}
\label{eqn:uniq_sol_fulldim}
\{\hat{\boldsymbol{\alpha}} |\mathbf{y} = \mathbf{X}\hat{\boldsymbol{\alpha}}, \hat{\boldsymbol{\alpha}} \geq 0 \} = \{\boldsymbol{\alpha}\}.
\end{equation}
For a given non-zero support set $\mathcal{D}$ of $\boldsymbol{\beta}$
\begin{equation}
\label{eqn:uniq_sol_reddim}
\{\hat{\boldsymbol{\alpha}} | \tilde{\mathbf{y}} = \tilde{\mathbf{X}}\hat{\boldsymbol{\alpha}}, \hat{\boldsymbol{\alpha}} \geq 0 \} = \{\boldsymbol{\alpha}\}
\end{equation} holds if (\ref{eqn:cond2_uniq_sol_nn_rep_sp_err}) is satisfied, and 
\begin{equation}
\label{eqn:cond_red_nnls}
 \exists \mathbf{h} \text{ such that } \mathbf{h}^T \mathbf{X} >0 \text{  and  } \mathbf{h}^T \mathbf{D}_1 = \mathbf{0}.
\end{equation}
\end{lemma}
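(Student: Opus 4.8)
The plan is to feed the reduced non-negative system $\tilde{\mathbf{y}} = \tilde{\mathbf{X}}\boldsymbol{\alpha}$, $\boldsymbol{\alpha}\geq\mathbf{0}$ into Theorem~\ref{thm:non_negrec_coh}, now with $\tilde{\mathbf{X}}=\mathbf{P}_\mathcal{D}\mathbf{X}$ in the role of the dictionary. To do so I have to verify two things: that $\boldsymbol{\alpha}$ (the unique non-negative solution of $\mathbf{y}=\mathbf{X}\hat{\boldsymbol{\alpha}}$ supplied by the hypothesis) is still a feasible point of the reduced problem and still has $S_x$ non-zeros, and that $\tilde{\mathbf{X}}$ still belongs to $\mathcal{M}^{+}$. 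Granting both, Theorem~\ref{thm:non_negrec_coh} together with (\ref{eqn:cond2_uniq_sol_nn_rep_sp_err}) forces the set (\ref{eqn:uniq_sol_reddim}) to be a singleton, and since it contains $\boldsymbol{\alpha}$ it must equal $\{\boldsymbol{\alpha}\}$.

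Feasibility and sparsity are essentially free. From $\mathbf{y}=\mathbf{X}\boldsymbol{\alpha}$, $\boldsymbol{\alpha}\geq\mathbf{0}$, left-multiplication by the projector $\mathbf{P}_\mathcal{D}$ gives $\tilde{\mathbf{y}}=\mathbf{P}_\mathcal{D}\mathbf{X}\boldsymbol{\alpha}=\tilde{\mathbf{X}}\boldsymbol{\alpha}$, and left multiplication cannot change the support of $\boldsymbol{\alpha}$, so $\|\boldsymbol{\alpha}\|_0=S_x$ in the reduced system too; thus $\boldsymbol{\alpha}$ lies in (\ref{eqn:uniq_sol_reddim}) and only uniqueness of that set remains.

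The step I expect to be the crux is the membership $\tilde{\mathbf{X}}\in\mathcal{M}^{+}$, which is precisely what condition (\ref{eqn:cond_red_nnls}) encodes. Since $\mathbf{P}_\mathcal{D}=\mathbf{I}_M-\mathbf{D}_1\mathbf{D}_1^{\dagger}$ is the symmetric orthogonal projector onto the orthogonal complement of the column space of $\mathbf{D}_1$ (meaningful because $S_d<1+1/\mu_d$ makes $\mathbf{D}_1$ of full column rank), any $\mathbf{h}$ with $\mathbf{h}^T\mathbf{D}_1=\mathbf{0}$ satisfies $\mathbf{P}_\mathcal{D}\mathbf{h}=\mathbf{h}$, whence $\mathbf{h}^T\tilde{\mathbf{X}}=\mathbf{h}^T\mathbf{P}_\mathcal{D}\mathbf{X}=(\mathbf{P}_\mathcal{D}\mathbf{h})^T\mathbf{X}=\mathbf{h}^T\mathbf{X}>\mathbf{0}$. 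Hence the row span of $\tilde{\mathbf{X}}$ meets the positive orthant, so $\tilde{\mathbf{X}}\in\mathcal{M}^{+}$ with the same witness $\mathbf{h}$; as a by-product, $\mathbf{h}^T\mathbf{x}_i>0$ for each $i$, so $\tilde{\mathbf{X}}$ has no zero column and its coherence $\mu_{\tilde{x}}$ is well defined.

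It then only remains to account for the fact that the columns of $\tilde{\mathbf{X}}$ are not unit-norm, while Theorem~\ref{thm:non_negrec_coh} is phrased for a normalized dictionary: replacing $\tilde{\mathbf{X}}$ by its column-normalized version alters neither the two-sided coherence $\mu_{\tilde{x}}$ (scale invariant), nor membership in $\mathcal{M}^{+}$, nor the singleton property of the non-negative solution set (the rescaling is a positive-diagonal change of variables on $\hat{\boldsymbol{\alpha}}$). Applying Theorem~\ref{thm:non_negrec_coh} to this normalized $\tilde{\mathbf{X}}$, with $S_x<0.5(1+1/\mu_{\tilde{x}})$ coming from (\ref{eqn:cond2_uniq_sol_nn_rep_sp_err}), completes the argument. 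So apart from invoking Theorem~\ref{thm:non_negrec_coh}, the real content is just the observation that (\ref{eqn:cond_red_nnls}) is the verbatim translation of ``$\mathbf{X}\in\mathcal{M}^{+}$'' through the orthogonal projection $\mathbf{P}_\mathcal{D}$, plus the routine normalization bookkeeping.
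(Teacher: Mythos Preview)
Your proposal is correct and follows essentially the same approach as the paper: apply Theorem~\ref{thm:non_negrec_coh} to the reduced dictionary $\tilde{\mathbf{X}}$, using condition~(\ref{eqn:cond_red_nnls}) to certify $\tilde{\mathbf{X}}\in\mathcal{M}^{+}$ via $\mathbf{h}^T\tilde{\mathbf{X}}=(\mathbf{P}_\mathcal{D}\mathbf{h})^T\mathbf{X}=\mathbf{h}^T\mathbf{X}>\mathbf{0}$. Your write-up is in fact tidier than the paper's: the paper argues somewhat in reverse (deriving $\mathbf{h}^T\mathbf{D}_1=\mathbf{0}$ from the requirement that the same witness work for both $\mathbf{X}$ and $\tilde{\mathbf{X}}$), whereas you verify sufficiency directly and also handle the column-normalization bookkeeping that the paper leaves implicit.
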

\begin{proof}
From Theorem \ref{thm:non_negrec_coh}, we know that the singleton condition (\ref{eqn:uniq_sol_reddim}) holds true if (a) the condition in (\ref{eqn:cond2_uniq_sol_nn_rep_sp_err}) is satisfied, and (b) $\exists \mathbf{r}$ such that $\mathbf{r}^T \tilde{\mathbf{X}} > 0$. Since (\ref{eqn:uniq_sol_fulldim}) is true by assumption, $\exists \mathbf{h}$ such that $\mathbf{h}^T \mathbf{X} > 0$. For $\mathbf{h}^T \mathbf{X} > 0$ and $\mathbf{r}^T \tilde{\mathbf{X}} > 0$ to hold together, we should have $\mathbf{h} = \mathbf{P}_\mathcal{D}^T \mathbf{r}$. Therefore, we have $\mathbf{h}^T \mathbf{D}_1 = \mathbf{0}$, following the definition of $\mathbf{P}_{\mathcal{D}}$ in (\ref{eqn:PD_defn}).
\end{proof} If the sufficient conditions in Lemma \ref{lem:necc_cond_beta_known}  are satisfied, NNLS can be used to recover the unique solution of (\ref{eqn:nn_rep_sp_err_red}), for a given non-zero support $\mathcal{D}$ of $\boldsymbol{\beta}$.

\subsection{Non-zero Supports of $\boldsymbol{\alpha}$ and $\boldsymbol{\beta}$ are Unknown}
\label{sec:both_supp_unknown}
When the supports of $\boldsymbol{\alpha}$ and $\boldsymbol{\beta}$ in (\ref{eqn:nonneg_rep_sparse_corr}) are unknown, we will first consider the problem of recovering the coefficients using the convex program,
\begin{equation}
\label{eqn:comb_rec_l1}
\min_{\boldsymbol{\delta}} \|\boldsymbol{\delta}\|_1 \text{ subject to } \mathbf{y} = \mathbf{G} \boldsymbol{\delta}, \text{ }\mathbf{I}_{\bar{\mathcal{X}}} \boldsymbol{\delta} \geq \mathbf{0},
\end{equation} which we refer to as COMB-BP. Here, $\mathbf{I}_{\bar{\mathcal{X}}} = [\mathbf{I}_{K_{x}} \quad \mathbf{0}_{K_x,K_g}]$ is an indicator matrix that picks out $\boldsymbol{\alpha}$ from the vector $\boldsymbol{\delta}$ such that the constraint in (\ref{eqn:comb_rec_l1}) is equivalent to $\boldsymbol{\alpha} \geq 0$. When deriving the threshold on $S_g$ for the recovery of the $ML_0$ solution, without loss of generality, we assume that $S_x \leq S_d$ and $\mu_x \leq \mu_d$. Similar thresholds can be derived for the other cases also.

\subsubsection{Condition for Recovering the $ML_0$ Solution using the Convex Program}
\label{sec:comb_rec_l1_cond}
The sufficient condition for the COMB-BP to recover the $ML_0$ solution is
\begin{equation}
\label{eqn:comb_rec_bp_cond}
\max_{i \in \mathcal{G}^c} \|\mathbf{G}_1^{\dagger} \mathbf{g}_i\|_1 < 1.
\end{equation} This condition is same as the one given in \cite[Theorem 3.3]{Tropp2004a} for recovery of a general sparse vector using BP, since the $\ell_1$ norm does not depend on the sign of the coefficients. From \cite[Theorem 3]{Kuppinger2011}, the condition (\ref{eqn:comb_rec_bp_cond}) can be expressed as
\begin{equation}
\label{eqn:comb_rec_bp_coh}
(1+\mu_d)(2 S_x \mu_d +S_d(\mu_g+\mu_d)) + 2 S_x S_d (\mu_g^2 - \mu_d^2) < (1+\mu_d)^2.
\end{equation} The threshold on the total number of non-zero coefficients, $S_g$, is derived using (\ref{eqn:comb_rec_bp_coh}), and can be found in \cite[Corollary 4]{Kuppinger2011}.
\subsubsection{Condition for Recovering the $ML_0$ Solution using a Greedy Algorithm}
\label{sec:comb_rec_neighb_cond}
We propose a greedy pursuit algorithm that can be used to recover the $ML_0$ solution from (\ref{eqn:nonneg_rep_sparse_corr}). The proposed COMB-OMP algorithm follows a procedure similar to the OMP algorithm \cite{Tropp2004a} and is presented in Table \ref{tab:comb_omp}.  The stopping criterion for this algorithm is either the maximum number of iterations/non-zero coefficients, $T$, or the $\ell_2$ norm of the residual, $\epsilon$. In the algorithm, $\pi(i)$ denotes the correlations computed for the current residual with the normalized atom $\mathbf{g}_i$. When updating the index set of chosen dictionary atoms, $\mathcal{G}_t$, we consider only the positive maximum correlation for atoms corresponding to $\mathbf{X}$ and absolute maximum correlation for atoms corresponding to $\mathbf{D}$. This is consistent with our combined representation scheme. The solution update can be performed using a constrained least squares procedure. The final debiasing step computes the solution using the LSI algorithm described in Section \ref{sec:both_supp_known}. This step will be ignored when deriving the sparsity threshold, since it improves the solution only when the sparsity threshold is not satisfied and  when there is additive noise in the combined model (\ref{eqn:nonneg_rep_sparse_corr}).

\begin{center}
\begin{longtable}{|l|}
\caption{The COMB-OMP Algorithm for Greedy Pursuit of a Combined Representation.}
\label{tab:comb_omp}


\endfirsthead

\hline
\endhead

\hline
\endfoot

\endlastfoot

\hline
\textbf{Goal} \\
Recover the $ML_0$ solution from $\mathbf{y} = \mathbf{G} \boldsymbol{\delta}$ such that $\mathbf{I}_{\bar{\mathcal{X}}} \boldsymbol{\delta} \geq \mathbf{0}$. \\

\textbf{Input} \\
$\mathbf{y}$, the input vector. \\
$\mathbf{G} = [\mathbf{X} \quad \mathbf{D}]$, the combined dictionary. \\
$T$, the desired number of iterations.\\
$\epsilon$, error tolerance. \\

\textbf{Initialization}\\
- Iteration count, $t = 0$.\\
- Solution, $\boldsymbol{\delta}_{t} = 0$.\\
- Residual, $\mathbf{r}_t = \mathbf{y}-\mathbf{G}\boldsymbol{\delta}_t = \mathbf{y}$.\\
- Active coefficient supports, $\mathcal{X}_t = \{\}$, $\mathcal{D}_t = \{\}$, $\mathcal{G}_t = \{\}$.\\
- All coefficient supports, $\bar{\mathcal{X}} = \{i\}_{i=1}^{K_x}$, $\bar{\mathcal{D}} = \{i\}_{i=K_x+1}^{K_g}$, $\bar{\mathcal{G}} = \bar{\mathcal{X}} \cup \bar{\mathcal{D}}$.\\
- Non-active coefficient supports, $\mathcal{X}_t^c = \bar{\mathcal{X}}$, $\mathcal{D}_t^c = \bar{\mathcal{D}}$, $\mathcal{G}_t^c = \bar{\mathcal{G}}$.\\

\textbf{Algorithm}\\
{Loop while } $t \leq T$ \text{ OR } $\|\mathbf{r}_t\|_2 > \epsilon$\\
\quad - \textbf{Compute correlations:} \\
\quad \quad $\pi(i) = \frac{\mathbf{r}_t^T \mathbf{g}_i}{\|\mathbf{g}_i\|_2}$ for $1 \leq i \leq K_g$. \\
\quad - \textbf{Update support:}\\
\quad \quad $\hat{i} = \displaystyle \argmax_{i \in \mathcal{X}_t^c} [\pi(i)]^{+}$. \\
\quad \quad $\hat{j} = \displaystyle \argmax_{j \in \mathcal{D}_t^c}|\pi(j)|$. \\
\quad \quad $\hat{k} = \displaystyle \argmax ([\pi(\hat{i})]^{+},|\pi(\hat{j})|)$. \\
\quad \quad If $\hat{k} \in \mathcal{X}_t^c$, \text{ then } $\mathcal{X}_{t+1} = \mathcal{X}_t \cup \{\hat{k}\}$, \text{ else } $\mathcal{D}_{t+1} = \mathcal{D}_t \cup \{\hat{k}\}$.\\
\quad \quad $\mathcal{G}_{t+1} = \mathcal{X}_{t+1} \cup \mathcal{D}_{t+1}$.\\
\quad - \textbf{Update solution:}\\
\quad \quad $\boldsymbol{\delta}_{t+1} = \argmin_{\boldsymbol{\delta}} \|\mathbf{y}-\mathbf{G}\boldsymbol{\delta}\|_2$ \text{ subject to }  $\textit{\text{support}}(\boldsymbol{\delta}) = \mathcal{G}_{t+1}$, $\text{  } \mathbf{I}_{\mathcal{X}_t} \boldsymbol{\delta} \geq \mathbf{0}$. \\
\quad - \textbf{Update residual: } $\mathbf{r}_{t+1} = \mathbf{y}-\mathbf{G}\boldsymbol{\delta}_{t+1}$.\\
\quad - \textbf{Update support sets: } \\
\quad \quad $\mathcal{G}_{t+1}^c = \bar{\mathcal{G}} - \mathcal{G}_{t+1}, 
\mathcal{X}_{t+1}^c = \bar{\mathcal{X}} - \mathcal{X}_{t+1}, \mathcal{D}_{t+1}^c = \bar{\mathcal{D}} - \mathcal{D}_{t+1}.$\\
\quad - \textbf{Update iteration count: } $t = t+1$.\\
end\\
Debias to compute final $\boldsymbol{\delta}$:\\
$\boldsymbol{\delta}_{t} = \argmin_{\boldsymbol{\delta}} \|\mathbf{y}-\mathbf{G}\boldsymbol{\delta}\|_2$ \text{ subject to }  $\textit{\text{support}}(\boldsymbol{\delta}) = \mathcal{G}_{t}$, $\text{  } \mathbf{I}_{\bar{\mathcal{X}}} \boldsymbol{\delta} \geq \mathbf{0}$.\\
\hline
\end{longtable}
\end{center}

The sufficient sparsity threshold on the coefficient vector under which the COMB-OMP will recover the $ML_0$ solution will be investigated. Some of the strategies used in the proofs are inspired by similar techniques used in \cite{Kuppinger2011,Studer2011,Tropp2004a}. In order to derive the threshold, we will divide the dictionary $\mathbf{G} = [\mathbf{X} \quad \mathbf{D}]$ into four sub-dictionaries $\mathbf{X}_1 \in \mathbb{R}^{M \times S_x}$, $\mathbf{X}_2 \in \mathbb{R}^{M \times (K_x-S_x)}$, $\mathbf{D}_1 \in \mathbb{R}^{M \times S_d}$ and $\mathbf{D}_2 \in \mathbb{R}^{M \times (K_d-S_d)}$. We assume that the matrix $\mathbf{G}_1 = [\mathbf{X}_1 \text{ } \mathbf{D}_1]$ contains the atoms that participate in the representation and $\mathbf{G}_2 = [\mathbf{X}_2 \text{ } \mathbf{D}_2]$ contains those that do not participate. This implies that the signal $\mathbf{y}$ can be represented as
\begin{equation}
\label{eqn:comb_rep_omp}
\mathbf{y} = \mathbf{X}_1 \boldsymbol{\alpha}_1+\mathbf{D}_1 \boldsymbol{\beta}_1,
\end{equation} where the elements of $\boldsymbol{\alpha}_1 \in \mathbb{R}^{S_x}$ are strictly positive and those of $\boldsymbol{\beta}_1 \in \mathbb{R}^{S_d}$ are non-zero. 

\begin{lemma}
\label{lem:comb_omp_l1_cond}
When the matrix $\mathbf{G}_1 = [\mathbf{X}_1 \text{ } \mathbf{D}_1]$ has full column rank, $\mathbf{y}$ is given by (\ref{eqn:comb_rep_omp}), and the residual $\mathbf{r}_t$  of COMB-OMP satisfies
\begin{equation}
\label{eqn:comb_omp_res_rel}
\max(\max(\mathbf{X}_1^T\mathbf{r}_t, \mathbf{0}), \|\mathbf{D}_1^T \mathbf{r}_t\|_\infty) = \|\mathbf{G}_1^T \mathbf{r}_t\|_\infty,
\end{equation} the sufficient condition for COMB-OMP to uniquely recover the $ML_0$ solution from (\ref{eqn:nonneg_rep_sparse_corr}) is
\begin{equation}
\label{eqn:comb_omp_l1_cond}
\max_{i \in \mathcal{G}^c} \|\mathbf{G}_1^{\dagger} \mathbf{g}_i \|_{1} < 1.
\end{equation}
\end{lemma}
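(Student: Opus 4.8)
The plan is to run the familiar exact-recovery-condition argument for OMP (as in \cite{Tropp2004a}), adapted to the combined greedy rule of COMB-OMP. The whole proof rests on a single invariant, which I would establish by induction on the iteration count $t$: as long as COMB-OMP has only selected atoms indexed by $\mathcal{G}$, the residual $\mathbf{r}_t$ stays in the column space of $\mathbf{G}_1$, and the atom selected next again belongs to $\mathcal{G}$. The base case is immediate: $\mathcal{G}_0 = \{\}\subseteq\mathcal{G}$ and $\mathbf{r}_0 = \mathbf{y} = \mathbf{X}_1\boldsymbol{\alpha}_1 + \mathbf{D}_1\boldsymbol{\beta}_1 \in \mathrm{range}(\mathbf{G}_1)$ by (\ref{eqn:comb_rep_omp}).

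For the inductive step, assume the invariant at iteration $t$ and $\mathbf{r}_t\neq\mathbf{0}$. Since $\mathbf{G}_1$ has full column rank, $\mathbf{G}_1\mathbf{G}_1^{\dagger}$ is the orthogonal projector onto $\mathrm{range}(\mathbf{G}_1)$, so $\mathbf{r}_t = \mathbf{G}_1\mathbf{G}_1^{\dagger}\mathbf{r}_t = (\mathbf{G}_1^{\dagger})^T\bigl(\mathbf{G}_1^T\mathbf{r}_t\bigr)$. Hence, for every $i\in\mathcal{G}^c$, using $\pi(i)=\mathbf{g}_i^T\mathbf{r}_t$ (columns are unit-norm) and Hölder's inequality,
\[
|\pi(i)| \;=\; \bigl|(\mathbf{G}_1^{\dagger}\mathbf{g}_i)^T(\mathbf{G}_1^T\mathbf{r}_t)\bigr| \;\le\; \|\mathbf{G}_1^{\dagger}\mathbf{g}_i\|_1\,\|\mathbf{G}_1^T\mathbf{r}_t\|_\infty \;<\; \|\mathbf{G}_1^T\mathbf{r}_t\|_\infty ,
\]
where the strict inequality uses the hypothesis (\ref{eqn:comb_omp_l1_cond}) together with $\|\mathbf{G}_1^T\mathbf{r}_t\|_\infty>0$ (true since $\mathbf{r}_t\neq\mathbf 0$ lies in the range of the full-rank $\mathbf{G}_1$). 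On the other hand, the stationarity (KKT) conditions of the constrained least-squares update force $\mathbf{x}_i^T\mathbf{r}_t\le 0$ for $i\in\mathcal{X}_t$ and $\mathbf{d}_j^T\mathbf{r}_t=0$ for $j\in\mathcal{D}_t$, so the combined score of the already-selected good atoms is zero and the best combined score over the \emph{unselected} good atoms equals $\max\bigl(\max(\mathbf{X}_1^T\mathbf{r}_t,\mathbf 0),\,\|\mathbf{D}_1^T\mathbf{r}_t\|_\infty\bigr)$, which by hypothesis (\ref{eqn:comb_omp_res_rel}) equals $\|\mathbf{G}_1^T\mathbf{r}_t\|_\infty$. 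Comparing the two, the index $\hat k$ chosen in the ``Update support'' step of Table \ref{tab:comb_omp} attains the value $\|\mathbf{G}_1^T\mathbf{r}_t\|_\infty$ that no atom of $\mathcal{G}^c$ can reach, so $\hat k\in\mathcal{G}$ and $\mathcal{G}_{t+1}\subseteq\mathcal{G}$. Then $\boldsymbol{\delta}_{t+1}$ is supported in $\mathcal{G}_{t+1}$, whence $\mathbf{G}\boldsymbol{\delta}_{t+1}\in\mathrm{range}(\mathbf{G}_1)$ and $\mathbf{r}_{t+1}=\mathbf{y}-\mathbf{G}\boldsymbol{\delta}_{t+1}\in\mathrm{range}(\mathbf{G}_1)$, closing the induction.

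It then remains to argue termination and exactness. Because $\mathbf{G}_1$ has full column rank and every entry of $\boldsymbol{\delta}_1=[\boldsymbol{\alpha}_1^T\ \boldsymbol{\beta}_1^T]^T$ is nonzero, $\mathbf{y}$ admits no representation using a strict subset of the columns indexed by $\mathcal{G}$; hence $\mathbf{r}_t\neq\mathbf 0$ whenever $\mathcal{G}_t\subsetneq\mathcal{G}$, and by the previous step a new atom of $\mathcal{G}$ is adjoined at each iteration. After $S_g=|\mathcal{G}|$ iterations $\mathcal{G}_t=\mathcal{G}$; since $\boldsymbol{\alpha}_1>\mathbf 0$ is an interior point of the feasible set, the constrained least-squares problem over this support has the same solution as the unconstrained one, which by full column rank is exactly $\boldsymbol{\delta}_1$. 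Thus COMB-OMP recovers the $ML_0$ solution.

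The one genuinely delicate ingredient is the bridge between the algorithm's combined selection rule and the scalar $\|\mathbf{G}_1^T\mathbf{r}_t\|_\infty$ that drives the Hölder bound — precisely the content of hypothesis (\ref{eqn:comb_omp_res_rel}). In the unconstrained OMP analysis one has $\max\bigl(\max(\mathbf{X}_1^T\mathbf{r}_t,\mathbf 0),\|\mathbf{D}_1^T\mathbf{r}_t\|_\infty\bigr)=\|\mathbf{G}_1^T\mathbf{r}_t\|_\infty$ automatically, but here it can fail when the largest-magnitude coordinate of $\mathbf{X}_1^T\mathbf{r}_t$ is negative, so the identity is imposed as a hypothesis in this lemma; delineating its range of validity — and the way this restriction ultimately prevents the factor-of-two sparsity improvement enjoyed by COMB-BP — is the substantive issue I expect to be taken up afterwards rather than inside this lemma. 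A secondary, routine point is the KKT/near-orthogonality claim used above, which follows from stationarity of the constrained least-squares subproblem.
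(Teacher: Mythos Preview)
Your argument is correct and follows essentially the same approach as the paper: you project $\mathbf{r}_t$ through $(\mathbf{G}_1^{\dagger})^T\mathbf{G}_1^T$ and apply H\"older's inequality to bound the correlations with atoms in $\mathcal{G}^c$ by $\|\mathbf{G}_1^{\dagger}\mathbf{g}_i\|_1\,\|\mathbf{G}_1^T\mathbf{r}_t\|_\infty$, then invoke hypothesis (\ref{eqn:comb_omp_res_rel}) to match this against the combined selection score over good atoms---exactly the ratio $\rho(\mathbf{r}_t)$ the paper bounds via the equivalent matrix-norm route $\|\mathbf{G}_1^{\dagger}\mathbf{G}_2\|_{1,1}$. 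Your write-up is in fact somewhat more complete than the paper's, since you make the induction explicit, supply the KKT observation that already-selected good atoms cannot win the combined score, and spell out the termination/exactness step, all of which the paper leaves implicit.
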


\begin{proof}
See \ref{app:comb_omp_l1_cond}.
\end{proof}

Note that the  sufficient condition (\ref{eqn:comb_omp_l1_cond}) given in Lemma \ref{lem:comb_omp_l1_cond} is the same for a general representation also \cite{Kuppinger2011,Tropp2004a}. However, the important difference in the case of recovery using COMB-OMP is that (\ref{eqn:comb_omp_l1_cond}) becomes sufficient only when (\ref{eqn:comb_omp_res_rel}) holds. As we will see in the following lemmas, this will lead to a significant difference in terms of sparsity threshold when compared to COMB-BP. We will first derive conditions under which the first step of the COMB-OMP, when $\mathbf{y} = \mathbf{r}_0$, will satisfy (\ref{eqn:comb_omp_res_rel}). This will then be extended to the residuals at all steps, $\mathbf{r}_t$, where $t \geq 1$.

\begin{lemma}
\label{lem:coh_cond1_comb_omp}
When the matrix $\mathbf{G}_1 = [\mathbf{X}_1 \text{ } \mathbf{D}_1]$ has full column rank, and $\mathbf{y}$ is given as (\ref{eqn:comb_rep_omp}), (\ref{eqn:comb_omp_res_rel}) will be satisfied for $\mathbf{y} = \mathbf{r}_0$ if
\begin{equation}
\label{eqn:coh_cond1_comb_omp}
(S_x - 1)\mu_d + S_d \mu_g < \frac{1}{2}.
\end{equation}
\end{lemma}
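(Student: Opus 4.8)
The plan is a short proof by contradiction. First I would record exactly what a failure of (\ref{eqn:comb_omp_res_rel}) looks like at $\mathbf{r}_0=\mathbf{y}$. Since $\|\mathbf{G}_1^T\mathbf{y}\|_\infty=\max(\max_i|\mathbf{x}_i^T\mathbf{y}|,\|\mathbf{D}_1^T\mathbf{y}\|_\infty)$ (maxima over the columns of $\mathbf{X}_1$ and $\mathbf{D}_1$), the left side of (\ref{eqn:comb_omp_res_rel}) is always $\le$ the right side, so a failure means it is strictly smaller. That forces simultaneously $\|\mathbf{D}_1^T\mathbf{y}\|_\infty<N$ and $\max_i[\mathbf{x}_i^T\mathbf{y}]^{+}<N$, where $N:=\|\mathbf{G}_1^T\mathbf{y}\|_\infty$; hence the maximal magnitude is attained at some column $\mathbf{x}_p$ of $\mathbf{X}_1$ with $\mathbf{x}_p^T\mathbf{y}=-N$, and $N>0$ (otherwise $\mathbf{y}=0$ by full rank), while $\mathbf{x}_i^T\mathbf{y}<N$ for every column of $\mathbf{X}_1$ and $|\mathbf{d}_j^T\mathbf{y}|<N$ for every column of $\mathbf{D}_1$. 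So I assume such a $p$ exists and aim to contradict (\ref{eqn:coh_cond1_comb_omp}).

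The estimates come from the expansion $\mathbf{y}=\mathbf{X}_1\boldsymbol{\alpha}_1+\mathbf{D}_1\boldsymbol{\beta}_1$ of (\ref{eqn:comb_rep_omp}), the coherence bounds $|\mathbf{x}_i^T\mathbf{x}_k|\le\mu_x\le\mu_d$, $|\mathbf{d}_i^T\mathbf{d}_k|\le\mu_d$, $|\mathbf{x}_i^T\mathbf{d}_j|\le\mu_g$, and crucially the strict positivity of $\boldsymbol{\alpha}_1$. Writing $a:=\|\boldsymbol{\alpha}_1\|_\infty$, $b:=\|\boldsymbol{\beta}_1\|_\infty$, I would expand three inner products. (i) From $\mathbf{x}_p^T\mathbf{y}=\alpha_p+\sum_{l\neq p}\alpha_l(\mathbf{x}_p^T\mathbf{x}_l)+\sum_j\beta_j(\mathbf{x}_p^T\mathbf{d}_j)=-N$ and $\alpha_p>0$ (so the self-term can be discarded rather than added), $N<\mu_d(S_x-1)a+\mu_g S_d b$. (ii) Choosing the column $\mathbf{x}_q$ with $\alpha_q=a$ and using $\mathbf{x}_q^T\mathbf{y}<N$, isolating $\alpha_q$ gives $a\,[1-\mu_d(S_x-1)]<N+\mu_g S_d b$. (iii) Choosing the column $\mathbf{d}_{j'}$ with $|\beta_{j'}|=b$ and using $|\mathbf{d}_{j'}^T\mathbf{y}|<N$ gives $b\,[1-\mu_d(S_d-1)]<N+\mu_g S_x a$, where $1-\mu_d(S_d-1)>0$ because $\mathbf{G}_1$ (hence $\mathbf{D}_1$) has full column rank and $S_d<1+1/\mu_d$ in the present setting.

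Finally I would eliminate $a$ and $b$. Substituting the bound on $a$ from (ii) into (i) makes the $\mathbf{X}_1$ contribution telescope, leaving $N\,[1-2\mu_d(S_x-1)]<\mu_g S_d\,b$, with $1-2\mu_d(S_x-1)>0$ since (\ref{eqn:coh_cond1_comb_omp}) forces $\mu_d(S_x-1)<1/2$. Bounding $b$ through (iii) together with the bound on $a$ from (ii) then yields an inequality of the shape $N<C\,N$, with $C$ a rational function of $S_x,S_d,\mu_d,\mu_g$; simplifying $C$ and using $S_x\le S_d$, $\mu_x\le\mu_d$, the inequality $C\le 1$ reduces to $(S_x-1)\mu_d+S_d\mu_g\le 1/2$. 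Under the strict hypothesis (\ref{eqn:coh_cond1_comb_omp}) this gives $N<C\,N\le N$, a contradiction; hence no such $p$ exists and (\ref{eqn:comb_omp_res_rel}) holds for $\mathbf{r}_0=\mathbf{y}$.

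I expect the last step to be the real obstacle: carrying out the elimination of the two $\ell_\infty$ norms so that the messy rational bound genuinely collapses to the clean linear threshold, and — just as important — tracking that every denominator that appears ($1-\mu_d(S_x-1)$, $1-\mu_d(S_d-1)$, $1-2\mu_d(S_x-1)$, and the one produced by the final substitution) stays strictly positive, since a single lost sign would break the contradiction; this is where the conditioning of $\mathbf{D}_1$ and the reductions $S_x\le S_d$, $\mu_x\le\mu_d$ are really used. One point to make explicit is that the argument needs $\boldsymbol{\alpha}_1>0$ and not merely $\boldsymbol{\alpha}_1\ge 0$ — it is the strict positivity of $\alpha_p$ in step (i) that lets the self-term help us — and the statement is false for sign-unrestricted $\mathbf{X}$ coefficients. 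Ties (several columns attaining magnitude $N$) are harmless, since only one non-negatively-correlated maximiser is needed and the strict inequalities in the failure description already exclude the degenerate configuration.
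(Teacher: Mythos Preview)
Your route diverges from the paper's, and the divergence is where the trouble lies. The paper does \emph{not} keep the two norms $a=\|\boldsymbol{\alpha}_1\|_\infty$ and $b=\|\boldsymbol{\beta}_1\|_\infty$ separate and then eliminate them through a chain of substitutions. Instead it reduces to the simpler sufficient condition $\max(\mathbf{X}_1^T\mathbf{y},\mathbf{0})=\|\mathbf{X}_1^T\mathbf{y}\|_\infty$, writes $\mathbf{z}=\mathbf{X}_1^T\mathbf{y}=\boldsymbol{\alpha}_1+\mathbf{C}_1\boldsymbol{\delta}_1$ with $\|\mathbf{C}_1\|_{\infty,\infty}\le(S_x-1)\mu_d+S_d\mu_g=:\rho$, and argues via a single scale $\alpha=\|\boldsymbol{\delta}_1\|_\infty$: the worst-case lower bound on the maximum positive entry is $\alpha(1-\rho)$ and the worst-case upper bound on the magnitude of the most negative entry is $\alpha\rho$ (this last uses $\alpha_i>0$ strictly). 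The threshold then drops out from $\alpha(1-\rho)>\alpha\rho$. No separate bookkeeping for $a$ and $b$, no inequality (iii), no coupled elimination.

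Your elimination step is where your plan breaks. Carrying your substitutions through, one gets (with $u=\mu_d(S_x-1)$, $v=\mu_g S_d$, $s=\mu_d(S_d-1)$, $w=\mu_g S_x$) the implication
\[
(1-2u)(1-s)\;<\;v(1+2w),
\]
and this does \emph{not} simplify to $u+v\ge\tfrac12$. For instance $S_x=1$, $S_d=40$, $\mu_g=0.01$, $\mu_d=0.9/39$ gives $u=0$, $v=0.4$, $s=0.9$, $w=0.01$: then $\rho=u+v=0.4<\tfrac12$, yet $(1-2u)(1-s)=0.1<0.408=v(1+2w)$, so no contradiction arises from your three inequalities alone. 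The reason is that inequality (iii) only controls $b$ through $\rho'=\mu_d(S_d-1)+\mu_g S_x$, and when $\mu_d>\mu_g$ and $S_d\gg S_x$ one can have $\rho'$ large while $\rho$ stays below $\tfrac12$; your chain then leaks. A second, related issue: you invoke $1-\mu_d(S_d-1)>0$ to make (iii) usable, citing full column rank of $\mathbf{D}_1$, but full rank does not imply that coherence bound (three unit vectors with pairwise inner product $0.6$ are independent yet give $\mu_d(S_d-1)=1.2$). So the positivity of that denominator is an \emph{extra} hypothesis your argument is silently importing. The paper avoids both problems by never splitting $a$ from $b$ and never touching the $\mathbf{D}_1$ block; the whole argument lives inside $\mathbf{X}_1^T\mathbf{y}$ and the single row-sum bound $\rho$.
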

\begin{proof}
See \ref{app:coh_cond1_comb_omp}.
\end{proof}

The condition given by (\ref{eqn:coh_cond1_comb_omp}) needs to be satisfied even if there is one non-negative component in the combined representation. For now, let us assume that (\ref{eqn:comb_omp_res_rel}) holds for all $\mathbf{r}_t$, where $t \geq 1$, and derive the threshold on $S_g$ such that the $ML_0$ solution can be recovered from (\ref{eqn:nonneg_rep_sparse_corr}). It will be shown later in the section that the threshold on $S_g$ obtained indeed implies that (\ref{eqn:comb_omp_res_rel}) holds for all $\mathbf{r}_t, t \geq 1$.

\begin{lemma}
\label{lem:rec_cond_comb_omp}
When the matrix $\mathbf{G}_1$ has full column rank, and $\mathbf{y}$ is given as (\ref{eqn:comb_rep_omp}), the sufficient condition for (\ref{eqn:comb_omp_l1_cond}) to be satisfied is
\begin{align}
\label{eqn:rec_cond_comb_omp}
\frac{S_x \mu_d + S_d \mu_g}{1-(S_x \mu_d + S_d \mu_g - \mu_d)} < 1
\end{align}
\end{lemma}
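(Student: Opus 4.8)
The plan is to run the standard coherence route to Tropp's exact recovery condition \cite{Tropp2004a}, but now for the block dictionary $\mathbf{G}_1 = [\mathbf{X}_1 \;\; \mathbf{D}_1]$. Since $\mathbf{G}_1$ has full column rank we may write $\mathbf{G}_1^{\dagger} = (\mathbf{G}_1^T \mathbf{G}_1)^{-1} \mathbf{G}_1^T$, so for every out-atom $\mathbf{g}_i$, $i \in \mathcal{G}^c$,
$\|\mathbf{G}_1^{\dagger} \mathbf{g}_i\|_1 = \|(\mathbf{G}_1^T \mathbf{G}_1)^{-1}(\mathbf{G}_1^T \mathbf{g}_i)\|_1 \le \|(\mathbf{G}_1^T \mathbf{G}_1)^{-1}\|_{1,1}\,\|\mathbf{G}_1^T \mathbf{g}_i\|_1$,
using submultiplicativity of the induced $\ell_1\!\to\!\ell_1$ norm (the maximum column sum). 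It therefore suffices to bound the two factors separately by coherence quantities and show their product is below $1$ whenever (\ref{eqn:rec_cond_comb_omp}) holds; this immediately gives (\ref{eqn:comb_omp_l1_cond}).

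For the cross-Gram factor I would expand $\|\mathbf{G}_1^T \mathbf{g}_i\|_1 = \sum_{j\in\mathcal{X}} |\mathbf{x}_j^T \mathbf{g}_i| + \sum_{j\in\mathcal{D}} |\mathbf{d}_j^T \mathbf{g}_i|$ and bound each inner product by the appropriate coherence depending on whether $\mathbf{g}_i$ lies in $\mathbf{X}_2$ or in $\mathbf{D}_2$: when $\mathbf{g}_i$ comes from $\mathbf{X}_2$ the $S_x$ inner products with columns of $\mathbf{X}_1$ are each at most $\mu_x \le \mu_d$ and the $S_d$ inner products with columns of $\mathbf{D}_1$ are each at most $\mu_g$, giving $\|\mathbf{G}_1^T \mathbf{g}_i\|_1 \le S_x \mu_d + S_d \mu_g$, and the $\mathbf{g}_i \in \mathbf{D}_2$ case is handled the same way; under the standing assumptions $S_x \le S_d$, $\mu_x \le \mu_d$ one keeps the relevant bound $\max_{i\in\mathcal{G}^c}\|\mathbf{G}_1^T\mathbf{g}_i\|_1 \le S_x\mu_d + S_d\mu_g$. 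For the first factor I would write $\mathbf{G}_1^T \mathbf{G}_1 = \mathbf{I} + \mathbf{E}$ with $\mathbf{E}$ the hollow (zero-diagonal) Gram matrix — legitimate since the columns are $\ell_2$-normalised. A column of $\mathbf{E}$ indexed by an atom of $\mathbf{X}_1$ has $S_x-1$ entries of magnitude at most $\mu_x \le \mu_d$ and $S_d$ entries of magnitude at most $\mu_g$, hence $\ell_1$-norm at most $(S_x-1)\mu_d + S_d\mu_g = S_x\mu_d + S_d\mu_g - \mu_d$; treating the $\mathbf{D}_1$-columns analogously yields $\|\mathbf{E}\|_{1,1} \le S_x\mu_d + S_d\mu_g - \mu_d$. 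Because (\ref{eqn:rec_cond_comb_omp}) forces the denominator $1-(S_x\mu_d + S_d\mu_g - \mu_d)$ to be strictly positive, in particular $\|\mathbf{E}\|_{1,1} < 1$, so the Neumann series gives $\|(\mathbf{I}+\mathbf{E})^{-1}\|_{1,1} \le 1/(1-\|\mathbf{E}\|_{1,1}) \le 1/(1-(S_x\mu_d + S_d\mu_g - \mu_d))$.

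Multiplying the two estimates gives $\max_{i\in\mathcal{G}^c}\|\mathbf{G}_1^{\dagger}\mathbf{g}_i\|_1 \le (S_x\mu_d + S_d\mu_g)/(1-(S_x\mu_d + S_d\mu_g - \mu_d))$, and the right-hand side is strictly less than $1$ exactly when (\ref{eqn:rec_cond_comb_omp}) holds, which is the claim. I expect the only delicate part to be the coherence bookkeeping: one must count $S_x-1$ (not $S_x$) same-type neighbours when bounding the hollow-Gram column sums, keep the $\mu_x$/$\mu_d$/$\mu_g$ assignments consistent with which block each atom belongs to, and verify that the worst case over the two possible locations of $\mathbf{g}_i$ (and over the two column types of $\mathbf{E}$) reduces, under the standing assumptions $S_x \le S_d$ and $\mu_x \le \mu_d$, to the single expression $S_x\mu_d + S_d\mu_g$ appearing in (\ref{eqn:rec_cond_comb_omp}). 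Convergence of the Neumann series is not a genuine obstacle, since it is already subsumed by the hypothesis (positivity of the denominator in (\ref{eqn:rec_cond_comb_omp}) forces $\|\mathbf{E}\|_{1,1} < 1$).
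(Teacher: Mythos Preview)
Your proposal is correct and follows essentially the same route as the paper's own proof: split $\|\mathbf{G}_1^{\dagger}\mathbf{g}_i\|_1$ via $\|(\mathbf{G}_1^T\mathbf{G}_1)^{-1}\|_{1,1}\,\|\mathbf{G}_1^T\mathbf{g}_i\|_1$, write $\mathbf{G}_1^T\mathbf{G}_1 = \mathbf{I} + \mathbf{C}$, bound $\|\mathbf{C}\|_{1,1}$ by block-coherence counting to get the Neumann-series estimate $1/(1-(S_x\mu_d+S_d\mu_g-\mu_d))$, bound the cross-Gram term by $S_x\mu_d+S_d\mu_g$ after checking both $\mathbf{X}_2$- and $\mathbf{D}_2$-cases, and multiply. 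The only minor difference is that the paper invokes full column rank (via diagonal dominance) to justify $\|\mathbf{C}\|_{1,1}<1$, whereas you more cleanly read it off from the positivity of the denominator in (\ref{eqn:rec_cond_comb_omp}).
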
 
\begin{proof}
See \ref{app:rec_cond_comb_omp}.
\end{proof}

\begin{figure*}[c]
  \includegraphics[width=11cm]{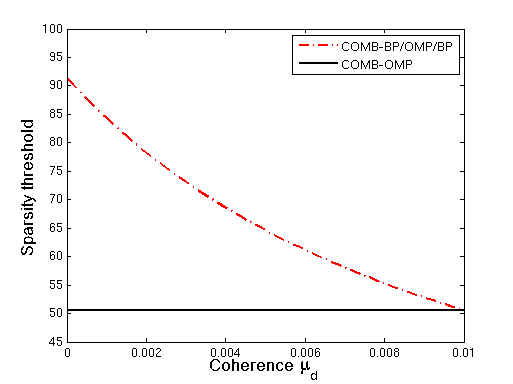}
  \centering
\caption{Deterministic sparsity thresholds for COMB-BP, BP, COMB-OMP and OMP with $\mu_g = 0.01$ in order to recover the $ML_0$ solution from (\ref{eqn:nonneg_rep_sparse_corr}).}
\label{Fig:det_sparse_thresh_comp}
\end{figure*}

When (\ref{eqn:rec_cond_comb_omp}) is satisfied, (\ref{eqn:coh_cond1_comb_omp}) holds as well. Since the number of non-zero coefficients, $S_x$ and $S_d$, of $\boldsymbol{\alpha}_1$ and $\boldsymbol{\beta}_1$ are unknown, we need to derive the condition on recovery that depends only on the number of non-zero coefficients of the combined representation $S_g$.

\begin{lemma}
\label{lem:coh_cond_comb_omp_overall}
When $\mathbf{y}$ is given by (\ref{eqn:comb_rep_omp}), the sparsity threshold on the combined coefficient vector $\boldsymbol{\delta}$ for (\ref{eqn:rec_cond_comb_omp}) to hold is
\begin{equation}
\label{eqn:coh_cond2_comb_omp}
S_g < 0.5\left(1+\frac{1}{\mu_m}\right),
\end{equation} where $\mu_m \equiv \max(\mu_g,\mu_d,\mu_x)$.
\end{lemma}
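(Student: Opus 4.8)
The plan is to start from the sufficient condition (\ref{eqn:rec_cond_comb_omp}) established in Lemma \ref{lem:rec_cond_comb_omp}, namely
\[
\frac{S_x \mu_d + S_d \mu_g}{1-(S_x \mu_d + S_d \mu_g - \mu_d)} < 1,
\]
and bound its left-hand side from above by an expression that depends only on the total sparsity $S_g = S_x + S_d$ and the single coherence parameter $\mu_m = \max(\mu_g,\mu_d,\mu_x)$. First I would observe that the map $u \mapsto u/(1-(u-\mu_d))$ is monotonically increasing in $u$ on the relevant range (as long as the denominator stays positive), so it suffices to replace the numerator quantity $S_x \mu_d + S_d \mu_g$ by any upper bound and still preserve the implication in the correct direction. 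Using $\mu_d \leq \mu_m$ and $\mu_g \leq \mu_m$ gives $S_x \mu_d + S_d \mu_g \leq (S_x + S_d)\mu_m = S_g \mu_m$, and likewise in the denominator I would replace $\mu_d$ by the smallest available bound (here $\mu_d \geq 0$, so dropping the $+\mu_d$ term only makes the denominator smaller and the fraction larger), arriving at the cruder but cleaner sufficient condition $S_g \mu_m / (1 - S_g \mu_m) < 1$, equivalently $2 S_g \mu_m < 1$, i.e. $S_g < 0.5(1 + 1/\mu_m)$ after accounting for the integrality/boundary bookkeeping (the ``$+1$'' arises because $S_g$ is an integer and the strict inequality on $S_x\mu_d + S_d\mu_g < 1/2$ analogue coming from Lemma \ref{lem:coh_cond1_comb_omp} can be sharpened, exactly as in the standard coherence threshold derivations of \cite{Tropp2004a,Kuppinger2011}).

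The one subtlety I would be careful about is that monotonicity of $u \mapsto u/(1-(u-\mu_d))$ and the direction of the bound in the denominator must be checked to be consistent: enlarging the numerator and shrinking $1-(u-\mu_d)$ both push the fraction up, so the inequality I derive is genuinely \emph{more} restrictive than (\ref{eqn:rec_cond_comb_omp}), hence sufficient — this is the point that needs an explicit line of justification rather than hand-waving. I would also need to confirm that under the derived threshold the denominator in (\ref{eqn:rec_cond_comb_omp}) is positive, so that the manipulations are valid; this follows because $S_g < 0.5(1+1/\mu_m)$ forces $S_g \mu_m < 0.5(1 + \mu_m) \le 1$ (since $\mu_m \le 1$), keeping $1 - S_g\mu_m$ strictly positive, and a fortiori $1 - (S_x\mu_d + S_d\mu_g - \mu_d) > 0$.

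The main obstacle is not any deep argument but getting the constant exactly right: naively the chain above yields $S_g < 1/(2\mu_m)$, and recovering the sharper $S_g < 0.5(1 + 1/\mu_m)$ requires exploiting that $S_x, S_d$ are integers together with the strict inequalities, or equivalently tightening the numerator bound by noting that one coherence term in $S_x\mu_d + S_d\mu_g$ effectively costs $(S_x - 1)$ rather than $S_x$ self-interactions (as already visible in (\ref{eqn:coh_cond1_comb_omp}) and in the $\mu_d$ term in the denominator of (\ref{eqn:rec_cond_comb_omp})). I would handle this by writing $S_x\mu_d + S_d\mu_g - \mu_d \le (S_g - 1)\mu_m$ and $S_x\mu_d + S_d\mu_g \le S_g\mu_m$, so (\ref{eqn:rec_cond_comb_omp}) is implied by $S_g\mu_m < 1 - (S_g-1)\mu_m$, i.e. $2 S_g \mu_m < 1 + \mu_m$, which is exactly (\ref{eqn:coh_cond2_comb_omp}). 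Finally I would remark that $\mu_m = \max(\mu_g,\mu_d,\mu_x)$ rather than merely $\max(\mu_g,\mu_d)$ is needed because the same quantity controls the full-column-rank hypothesis on $\mathbf{G}_1$ (via Theorem \ref{thm:comb_rec_known}) and the step-1 condition (\ref{eqn:coh_cond1_comb_omp}), so the worst of all three coherences is the honest parameter governing recovery.
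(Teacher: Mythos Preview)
Your argument is correct and in fact tidier than the paper's. You directly bound the two quantities appearing in (\ref{eqn:rec_cond_comb_omp}) by $S_g\mu_m$ and $(S_g-1)\mu_m$ respectively (the second requiring $S_x\ge 1$, which is implicit throughout), and then read off $(2S_g-1)\mu_m<1$, i.e.\ (\ref{eqn:coh_cond2_comb_omp}). The paper instead rewrites (\ref{eqn:rec_cond_comb_omp}) as $S_x<\psi(S_d)=\tfrac{1+\mu_d-2S_d\mu_g}{2\mu_d}$ and obtains the threshold by minimizing $S_d+\psi(S_d)$ over $S_d\ge 1$, arguing that the worst case occurs when $\mu_d=\mu_g=\mu_m$. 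Your route avoids that minimization entirely and sidesteps the slightly delicate case analysis on the sign of $1-\mu_g/\mu_d$ that the paper's formulation invites. One small point: your stated reason for taking $\mu_m=\max(\mu_g,\mu_d,\mu_x)$ (to cover the full-rank hypothesis) is unnecessary here, since the standing assumption $\mu_x\le\mu_d$ already makes $\max(\mu_g,\mu_d,\mu_x)=\max(\mu_g,\mu_d)$; the paper includes $\mu_x$ only so the statement reads symmetrically without that assumption.
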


\begin{proof}
See \ref{app:coh_cond_comb_omp_overall}.
\end{proof} A similar procedure to obtain the threshold on $S_g$ using (\ref{eqn:coh_cond1_comb_omp}), results in $S_g < 0.5 (2+1/\mu_m)$, which is only slightly better than (\ref{eqn:coh_cond2_comb_omp}). Moreover, as stated already (\ref{eqn:rec_cond_comb_omp}) implies (\ref{eqn:coh_cond1_comb_omp}) and not vice-versa. Therefore the sparsity threshold in (\ref{eqn:coh_cond2_comb_omp}) cannot be made better. We will show that the above bound is sufficient for recovering the subsequent atoms using the residuals $\mathbf{r}_t$, when $t \geq 1$.

\begin{lemma}
\label{lem:coh_cond2_comb_omp}
When $\mathbf{y}$ is given by (\ref{eqn:comb_rep_omp}), (\ref{eqn:comb_omp_res_rel}) will hold true for residual at any step, $\mathbf{r}_t$ for $t \geq 1$, when (\ref{eqn:coh_cond2_comb_omp}) is satisfied.
\end{lemma}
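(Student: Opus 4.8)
The plan is to argue inductively on the iteration count $t$, showing that as long as COMB-OMP has so far selected only atoms from the true support $\mathcal{G}$, the residual $\mathbf{r}_t$ again satisfies the structural inequality \eqref{eqn:comb_omp_res_rel}, so that Lemma \ref{lem:comb_omp_l1_cond} applies and the next selected atom is also in $\mathcal{G}$. The base case $t=0$ is exactly Lemma \ref{lem:coh_cond1_comb_omp}, and one checks that the threshold \eqref{eqn:coh_cond2_comb_omp} implies \eqref{eqn:coh_cond1_comb_omp} (indeed \eqref{eqn:coh_cond2_comb_omp} implies \eqref{eqn:rec_cond_comb_omp} by Lemma \ref{lem:coh_cond_comb_omp_overall}, and \eqref{eqn:rec_cond_comb_omp} implies \eqref{eqn:coh_cond1_comb_omp} as noted after Lemma \ref{lem:rec_cond_comb_omp}). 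So the work is entirely in the inductive step.

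For the inductive step, I would first observe that $\mathbf{r}_t = \mathbf{y} - \mathbf{G}\boldsymbol{\delta}_{t}$ lies in the column span of $\mathbf{G}_1$ (since $\mathbf{y}$ does, by \eqref{eqn:comb_rep_omp}, and the iterate $\boldsymbol{\delta}_t$ is supported on $\mathcal{G}_t \subseteq \mathcal{G}$ by the induction hypothesis). Hence $\mathbf{r}_t = \mathbf{X}_1 \mathbf{a} + \mathbf{D}_1 \mathbf{b}$ for some vectors $\mathbf{a} \in \mathbb{R}^{S_x}$, $\mathbf{b} \in \mathbb{R}^{S_d}$; in fact, writing $\mathbf{r}_t = \mathbf{G}_1\boldsymbol{\delta}_1'$ with $\boldsymbol{\delta}_1'$ the difference between the true coefficients and the current iterate restricted to $\mathcal{G}_1$, one has the residual is itself a combined representation over $\mathbf{G}_1$ — but crucially the sign structure is no longer guaranteed, since the least-squares/LSI update can produce an $\mathbf{a}$ with negative entries on coordinates already in $\mathcal{X}_t$. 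The key point to push through is that the bound \eqref{eqn:comb_omp_res_rel} — which says that the max over the $\mathbf{X}_1$-correlations of $\mathbf{r}_t$ that are \emph{positive}, together with the $\mathbf{D}_1$-correlations in absolute value, dominates all of $\|\mathbf{G}_1^T\mathbf{r}_t\|_\infty$ — should be re-derived for $\mathbf{r}_t$ using the same coherence bookkeeping as in the proof of Lemma \ref{lem:coh_cond1_comb_omp}, but now with $S_x$ and $S_d$ replaced (or bounded) by the sizes of the relevant active/true supports, all of which are at most $S_g$. That is, I expect the estimate to reduce to an inequality of the shape $(S_x-1)\mu_d + S_d\mu_g < \tfrac12$ with $S_x, S_d$ crudely bounded by $S_g$, giving a condition like $(S_g-1)\mu_m + S_g \mu_m < \tfrac12$, which is implied by \eqref{eqn:coh_cond2_comb_omp}.

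The main obstacle — and the reason this lemma is stated separately rather than folded into the earlier arguments — is handling the loss of sign information in the residual. At $t=0$ the residual equals $\mathbf{y}$, whose $\mathbf{X}_1$-component $\boldsymbol{\alpha}_1$ is genuinely positive, which is what made the ``positive-part'' correlation with $\mathbf{X}_1$ large in Lemma \ref{lem:coh_cond1_comb_omp}. For $t\ge 1$, I would need to verify that even though entries of $\mathbf{a}$ may be negative, the inequality \eqref{eqn:comb_omp_res_rel} still holds; the escape hatch is that \eqref{eqn:comb_omp_res_rel} is a statement purely about which side of $\mathbf{G}_1$ attains the max norm of the correlation vector, not about positivity of coefficients, so a worst-case sign analysis (taking the coordinate $i^\star \in \mathcal{G}_1$ achieving $\|\mathbf{G}_1^T\mathbf{r}_t\|_\infty$ and showing its correlation has the ``right'' sign when it is an $\mathbf{X}_1$ atom, or is simply captured by the absolute value when it is a $\mathbf{D}_1$ atom) should close the gap, using that the exact-recovery condition \eqref{eqn:comb_omp_l1_cond} forces the correlation with off-support atoms $\mathbf{g}_i$, $i\in\mathcal{G}^c$, to be strictly smaller than the on-support maximum. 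Finally I would note that this establishes the missing hypothesis of Lemma \ref{lem:comb_omp_l1_cond} at every step, completing the induction and thereby the deterministic recovery guarantee $S_g < 0.5(1+1/\mu_m)$ for COMB-OMP.
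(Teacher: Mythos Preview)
You have correctly identified the obstacle --- the loss of sign information in the residual's implicit coefficients --- but your proposed resolution is too vague to work, and the ``escape hatch'' you sketch (appealing to the off-support bound \eqref{eqn:comb_omp_l1_cond}) does not address the issue. Condition \eqref{eqn:comb_omp_res_rel} is a statement about \emph{on-support} correlations: it fails precisely when some $\mathbf{x}_i\in\mathbf{X}_1$ attains $\|\mathbf{G}_1^T\mathbf{r}_t\|_\infty$ with a \emph{negative} correlation. Knowing that off-support correlations are smaller gives no leverage on that sign.

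The paper circumvents the sign problem by a structural observation you are missing. After a least-squares update on the already-chosen atoms $\mathbf{G}_a$, the residual is $\mathbf{r}_t=(\mathbf{I}-\mathbf{G}_a\mathbf{G}_a^{\dagger})\mathbf{y}$, so $\mathbf{G}_a^T\mathbf{r}_t=\mathbf{0}$ and the only relevant correlations are with the not-yet-chosen true atoms $\mathbf{G}_b$. Writing $\mathbf{y}=\mathbf{G}_a\boldsymbol{\delta}_a+\mathbf{G}_b\boldsymbol{\delta}_b$ and applying the projector kills the $\mathbf{G}_a$ part, giving
\[
\mathbf{z}\;=\;\mathbf{G}_b^T\mathbf{r}_t\;=\;\mathbf{G}_b^T(\mathbf{I}-\mathbf{G}_a\mathbf{G}_a^{\dagger})\mathbf{G}_b\,\boldsymbol{\delta}_b\;=\;\mathbf{Q}\,\boldsymbol{\delta}_b,
\]
where $\boldsymbol{\delta}_b$ are the \emph{true} remaining coefficients --- so the positive sign on the $\mathbf{X}_1$-entries of $\boldsymbol{\delta}_b$ is intact. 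This is the key: you never need to reason about the signs of the residual's own representation $\boldsymbol{\delta}_1'$. What changes relative to $t=0$ is that the Gram-type matrix $\mathbf{X}_1^T\mathbf{G}_1$ is replaced by $\mathbf{Q}$, and one must bound its diagonal and off-diagonal entries. The paper does this via $\|(\mathbf{G}_a^T\mathbf{G}_a)^{-1}\|_2\le 1/[1-\mu_m(S_a-1)]^{+}$ (Gershgorin) and $\|\mathbf{G}_a^T\mathbf{g}_i\|_2^2\le S_a\mu_m^2$, obtaining $|q_{ij}|\le C_1\mu_m$ and $q_{ii}\ge C_1(1-S_a\mu_m)$ for a common factor $C_1>0$. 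One then repeats the $\hat z_m>\hat z_n$ worst-case argument of Lemma~\ref{lem:coh_cond1_comb_omp} verbatim, arriving at $\mu_m<1/(2S_g-S_a-2)$, which is implied by \eqref{eqn:coh_cond2_comb_omp} since $S_a\ge 1$. Your expected reduction to a bare $(S_g-1)\mu_m+S_g\mu_m<\tfrac12$ is morally in the right direction, but without the projector identity and the entrywise control of $\mathbf{Q}$ you have no mechanism to get there.
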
 
\begin{proof}
See \ref{app:coh_cond2_comb_omp}.
\end{proof}

Now, we are ready to state our main theorem without proof, since it follows directly from the lemmas stated in this section.
\begin{theorem}
\label{thm:comb_model_sparse_thresh}
For any $\mathbf{y}$ that follows the combined model in (\ref{eqn:nonneg_rep_sparse_corr}), COMB-OMP will recover the $ML_0$ solution if the number of non-zero coefficients, $S_g$, is less than $0.5(1+1/\mu_m)$. 
\end{theorem}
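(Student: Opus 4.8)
The plan is to assemble Lemmas~\ref{lem:comb_omp_l1_cond}--\ref{lem:coh_cond2_comb_omp} into a single induction over the iterations of COMB-OMP, so essentially no new computation is needed. First I would note that $\mu_m \equiv \max(\mu_g,\mu_d,\mu_x)$ is precisely the two-sided coherence of the combined dictionary $\mathbf{G}=[\mathbf{X}\ \mathbf{D}]$; hence the hypothesis $S_g < 0.5(1+1/\mu_m)$ is the classical coherence bound, which (i) forces $2S_g < \mathrm{spark}(\mathbf{G})$, so the $ML_0$ solution of (\ref{eqn:nonneg_rep_sparse_corr}) is unique and coincides with the $\boldsymbol{\delta}$ in (\ref{eqn:comb_rep_omp}), and (ii) guarantees that $\mathbf{G}_1=[\mathbf{X}_1\ \mathbf{D}_1]$, and more generally any column-submatrix of $\mathbf{G}$ with at most $S_g$ columns, has full column rank (by a Gershgorin argument on the Gram matrix, whose off-diagonal entries are bounded by $\mu_m$, or equivalently by Theorem~\ref{thm:comb_rec_known}). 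This full-rank property legitimizes the constrained least-squares updates in Table~\ref{tab:comb_omp} and every use of $\mathbf{G}_1^{\dagger}$.

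Next I would run the logical chain already set up in this section. By Lemma~\ref{lem:coh_cond_comb_omp_overall}, the threshold $S_g < 0.5(1+1/\mu_m)$ implies the recovery inequality (\ref{eqn:rec_cond_comb_omp}); by Lemma~\ref{lem:rec_cond_comb_omp}, (\ref{eqn:rec_cond_comb_omp}) in turn implies the exact-recovery condition (\ref{eqn:comb_omp_l1_cond}), i.e.\ $\max_{i\in\mathcal{G}^c}\|\mathbf{G}_1^{\dagger}\mathbf{g}_i\|_1 < 1$. Separately, since (\ref{eqn:rec_cond_comb_omp}) implies (\ref{eqn:coh_cond1_comb_omp}), Lemma~\ref{lem:coh_cond1_comb_omp} gives that the residual relation (\ref{eqn:comb_omp_res_rel}) holds for $\mathbf{r}_0=\mathbf{y}$, and Lemma~\ref{lem:coh_cond2_comb_omp} extends it to every subsequent residual $\mathbf{r}_t$, $t\ge 1$. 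Thus at every iteration both hypotheses of Lemma~\ref{lem:comb_omp_l1_cond} are in force, so that lemma applies verbatim: the atom chosen by COMB-OMP at step $t$ — the \emph{signed} maximum over $\mathcal{X}_t^c$ and the \emph{absolute} maximum over $\mathcal{D}_t^c$, then the larger of the two — is the globally most-correlated atom among the still-unselected columns of $\mathbf{G}_1$, never one indexed by $\mathcal{G}^c$.

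From here the induction closes in the standard OMP manner: if after $t$ steps $\mathcal{G}_t\subseteq\mathcal{G}$ and $\mathbf{r}_t\neq\mathbf{0}$, the support-update step adds a new index from $\mathcal{G}$, the constrained least-squares solve makes $\mathbf{r}_{t+1}$ orthogonal to $\mathrm{span}\{\mathbf{g}_i : i\in\mathcal{G}_{t+1}\}$, and — because every sub-dictionary of $\mathbf{G}_1$ inherits the coherence bound and hence full column rank — the process cannot stall with a nonzero residual supported on already-chosen atoms. Therefore after at most $S_g$ iterations $\mathcal{G}_t=\mathcal{G}$, the residual vanishes, and the (constrained) least-squares output is the unique vector with $\mathbf{y}=\mathbf{G}_1\boldsymbol{\delta}_1$ and $\boldsymbol{\alpha}_1\ge\mathbf{0}$, namely the $ML_0$ solution. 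I expect the only delicate point to be bookkeeping rather than mathematics: one must apply the residual relation (\ref{eqn:comb_omp_res_rel}) and the $\ell_1$ bound (\ref{eqn:comb_omp_l1_cond}) at each step to the \emph{current} active sub-dictionary (the columns of $\mathbf{G}_1$ not yet selected), which is legitimate because $\mu_x,\mu_d,\mu_g$ only decrease under passing to subsets, so Lemmas~\ref{lem:coh_cond1_comb_omp}--\ref{lem:coh_cond2_comb_omp} remain valid with $S_x,S_d$ replaced by the residual counts.
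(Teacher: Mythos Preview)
Your proposal is correct and follows essentially the same approach as the paper: the authors explicitly state the theorem ``without proof, since it follows directly from the lemmas stated in this section,'' and your write-up simply makes that logical chain explicit (Lemma~\ref{lem:coh_cond_comb_omp_overall} $\Rightarrow$ (\ref{eqn:rec_cond_comb_omp}) $\Rightarrow$ (\ref{eqn:comb_omp_l1_cond}) via Lemma~\ref{lem:rec_cond_comb_omp}, together with (\ref{eqn:comb_omp_res_rel}) at $t=0$ from Lemma~\ref{lem:coh_cond1_comb_omp} and at $t\ge 1$ from Lemma~\ref{lem:coh_cond2_comb_omp}, feeding into Lemma~\ref{lem:comb_omp_l1_cond}). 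Your added remarks on uniqueness of the $ML_0$ solution and full column rank of $\mathbf{G}_1$ are fine supplementary detail; note, though, that your final paragraph's subset-coherence bookkeeping is unnecessary, since Lemma~\ref{lem:coh_cond2_comb_omp} already handles the residual step directly via the $\mathbf{G}_a/\mathbf{G}_b$ decomposition rather than by reapplying the earlier lemmas with reduced counts.
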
 

From the lemmas proved in this section, it is clear that this threshold cannot be made better. Contrast this with the case of recovery using a convex program discussed in Section \ref{sec:comb_rec_l1_cond}, as well as sparsity thresholds for recovery using BP and OMP when $\boldsymbol{\alpha}$ and $\boldsymbol{\beta}$ are general sparse vectors \cite[Eqn. (13)]{Kuppinger2011}. Figure \ref{Fig:det_sparse_thresh_comp} compares the thresholds on $S_g$ when $\mu_g = 0.01$ and $\mu_d$ varies from $0$ to $0.01$. We can see that an improvement up to a factor of two can be observed with COMB-BP, BP and OMP algorithms when compared to COMB-OMP. From the proof of Lemma \ref{lem:coh_cond1_comb_omp}, it can be observed that introducing non-negative constraint on even one coefficient in the representation drastically alters the deterministic sparsity threshold of greedy OMP-like algorithms. Note that, however, the sparsity thresholds are pessimistic and in the experiments provided in the Section \ref{sec:exp}, it is shown that COMB-OMP performs better than OMP.

\begin{figure*}[c]
\begin{minipage}[b]{0.5\linewidth}
  \centering
  \includegraphics[width=8cm]{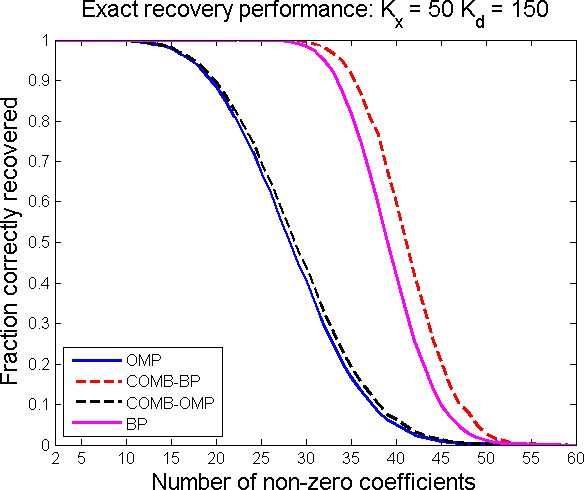}
  \centerline{(a)}\medskip
\end{minipage}
\hfill
\begin{minipage}[b]{0.5\linewidth}
  \centering
  \includegraphics[width=8cm]{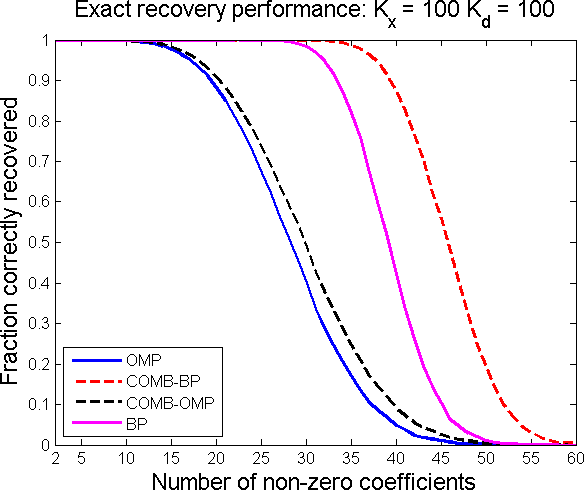}
  \centerline{(b)}\medskip
\end{minipage}
\hfill
\begin{minipage}[b]{1\linewidth}
  \centering
  \includegraphics[width=8cm]{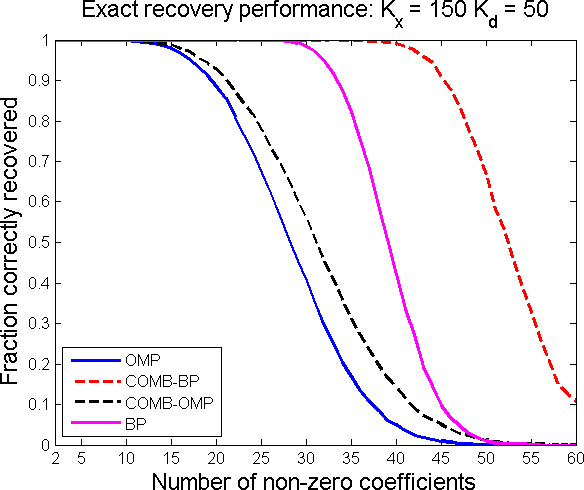}
  \centerline{(c)}\medskip
\end{minipage}
\caption{Exact recovery performance of COMB-BP, BP, COMB-OMP and OMP when $\mathbf{G} \in \mathbb{R}^{M \times K_g}$ $(M=100)$ is obtained from a Gaussian ensemble and the non-zero coefficients are realized from a uniform distribution. (a) $K_x = 50$ and $K_d = 150$, (b) $K_x = 100$ and $K_d = 100$, (c) $K_x = 150$ and $K_d = 50$. In each of the cases and for all the algorithms, the average theoretical sparsity threshold for exact recovery is $1$.}
\label{Fig:gauss_unif_exact_rec}
\end{figure*}

\begin{figure*}[c]
\begin{minipage}[b]{0.5\linewidth}
  \centering
  \includegraphics[width=8cm]{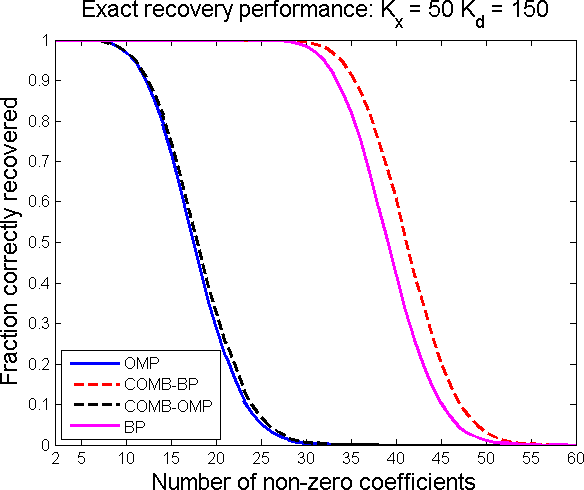}
  \centerline{(a)}\medskip
\end{minipage}
\hfill
\begin{minipage}[b]{0.5\linewidth}
  \centering
  \includegraphics[width=8cm]{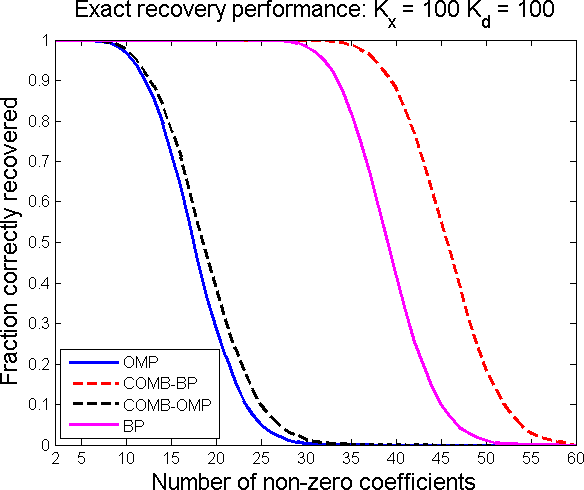}
  \centerline{(b)}\medskip
\end{minipage}
\hfill
\begin{minipage}[b]{1\linewidth}
  \centering
  \includegraphics[width=8cm]{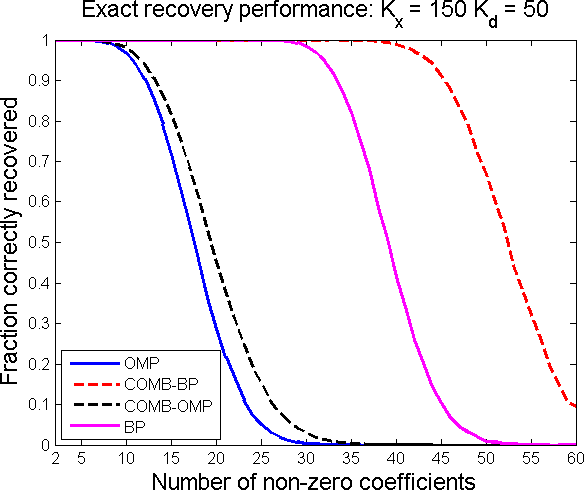}
  \centerline{(c)}\medskip
\end{minipage}
\caption{Exact recovery performance of COMB-BP, BP, COMB-OMP and OMP when $\mathbf{G} \in \mathbb{R}^{M \times K_g}$ $(M=100)$ is obtained from a Gaussian ensemble and the non-zero coefficients are random signs. (a) $K_x = 50$ and $K_d = 150$, (b) $K_x = 100$ and $K_d = 100$, (a) $K_x = 150$ and $K_d = 50$. In each of the cases and for all the algorithms, the average theoretical sparsity threshold for exact recovery is $1$.}
\label{Fig:gauss_signs_exact_rec}
\end{figure*}

\begin{figure*}[c]
\begin{minipage}[b]{0.5\linewidth}
  \centering
  \includegraphics[width=8cm]{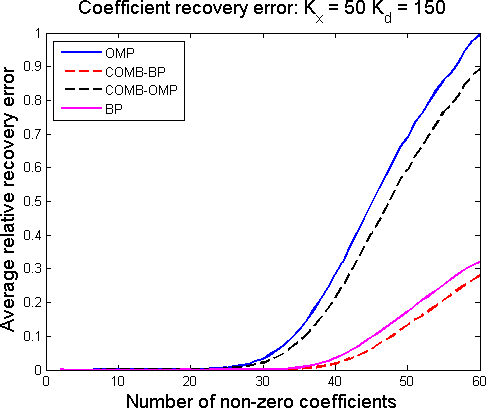}
  \centerline{(a)}\medskip
\end{minipage}
\hfill
\begin{minipage}[b]{0.5\linewidth}
  \centering
  \includegraphics[width=8cm]{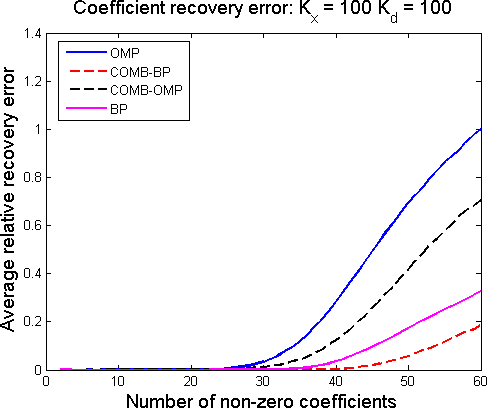}
  \centerline{(b)}\medskip
\end{minipage}
\hfill
\begin{minipage}[b]{1\linewidth}
  \centering
  \includegraphics[width=8cm]{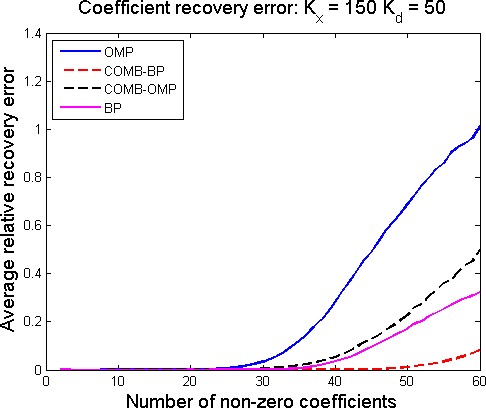}
  \centerline{(c)}\medskip
\end{minipage}
\caption{Average relative recovery error of COMB-BP, BP, COMB-OMP and OMP when $\mathbf{G} \in \mathbb{R}^{M \times K_g}$ $(M=100)$ is obtained from a Gaussian ensemble and the non-zero coefficients are realized from a uniform distribution. (a) $K_x = 50$ and $K_d = 150$, (b) $K_x = 100$ and $K_d = 100$, (a) $K_x = 150$ and $K_d = 50$.}
\label{Fig:gauss_unif_rec_err}
\end{figure*}

\begin{figure*}[c]
\begin{minipage}[b]{0.5\linewidth}
  \centering
  \includegraphics[width=8cm]{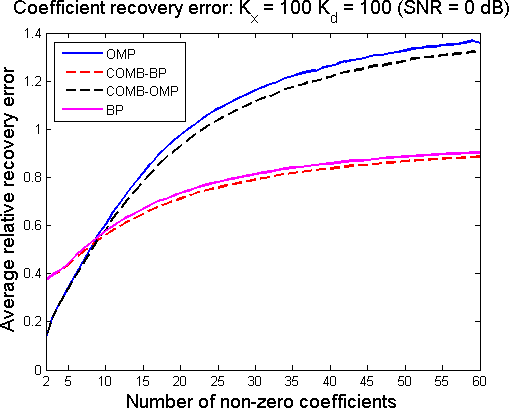}
  \centerline{(a)}\medskip
\end{minipage}
\hfill
\begin{minipage}[b]{0.5\linewidth}
  \centering
  \includegraphics[width=8cm]{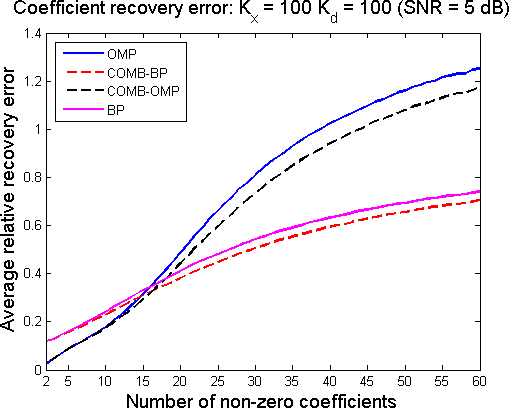}
  \centerline{(b)}\medskip
\end{minipage}
\hfill
\begin{minipage}[b]{0.5\linewidth}
  \centering
  \includegraphics[width=8cm]{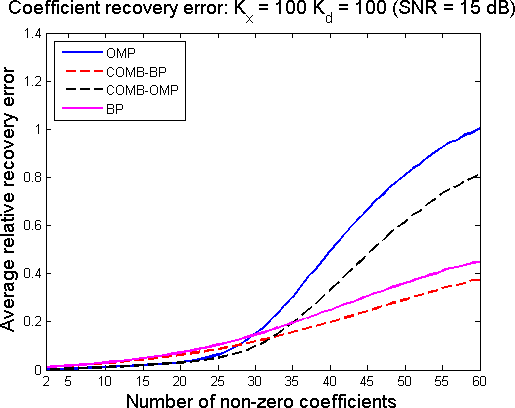}
  \centerline{(c)}\medskip
\end{minipage}
\hfill
\begin{minipage}[b]{0.5\linewidth}
  \centering
  \includegraphics[width=8cm]{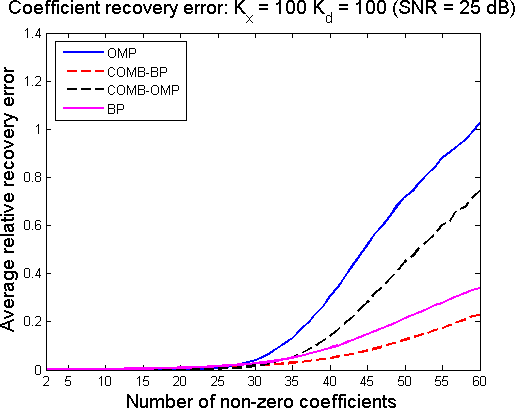}
  \centerline{(d)}\medskip
\end{minipage}
\caption{Average relative recovery error of COMB-BP, BP, COMB-OMP and OMP when $\mathbf{G} \in \mathbb{R}^{M \times K_g}$ $(M=100, K_x = 100, K_d = 100)$ is obtained from a Gaussian ensemble and the non-zero coefficients are realized from a uniform distribution. (a) SNR $= 0$ dB, (b) SNR $= 5$ dB, (c) SNR $= 15$ dB, and (d) SNR $= 25$ dB. }
\label{Fig:gauss_unif_rec_err_noisy}
\end{figure*}

\begin{figure*}[c]
\begin{minipage}[b]{0.5\linewidth}
  \centering
  \includegraphics[width=8cm]{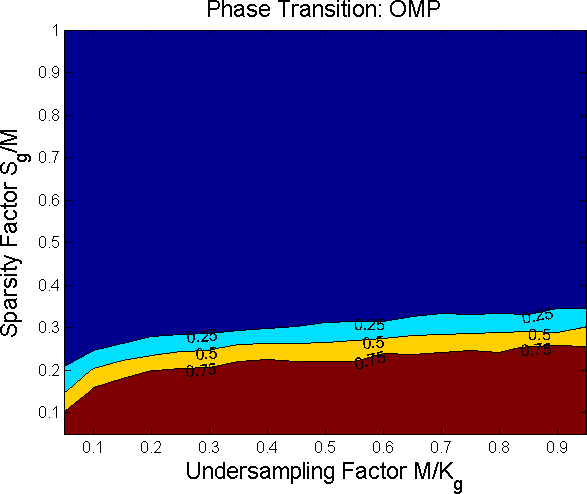}
  \centerline{(a)}\medskip
\end{minipage}
\hfill
\begin{minipage}[b]{0.5\linewidth}
  \centering
  \includegraphics[width=8cm]{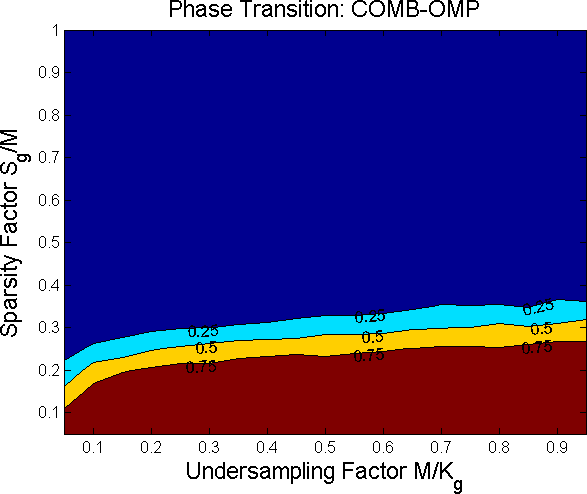}
  \centerline{(b)}\medskip
\end{minipage}
\hfill
\begin{minipage}[b]{0.5\linewidth}
  \centering
  \includegraphics[width=8cm]{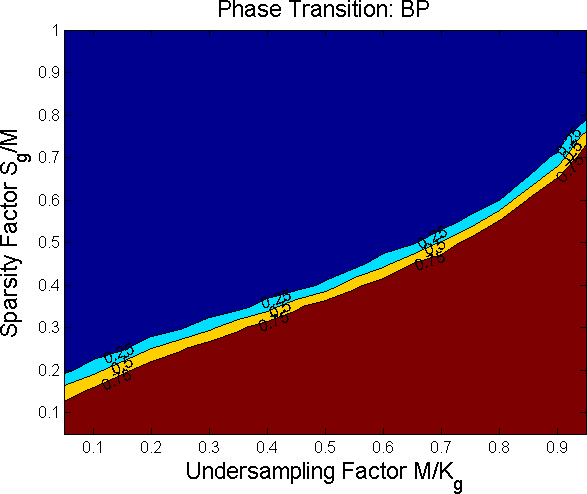}
  \centerline{(c)}\medskip
\end{minipage}
\hfill
\begin{minipage}[b]{0.5\linewidth}
  \centering
  \includegraphics[width=8cm]{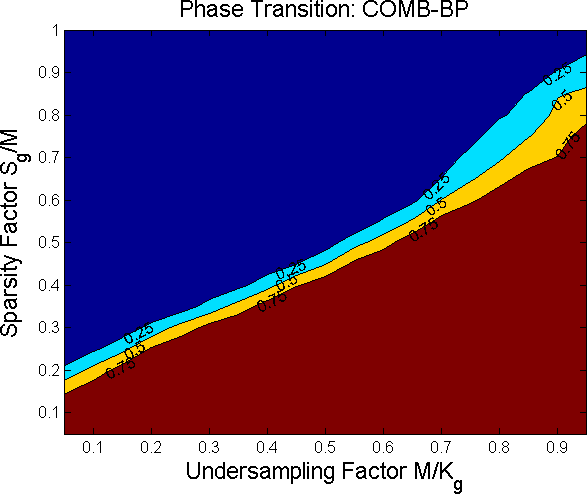}
  \centerline{(d)}\medskip
\end{minipage}
\caption{Phase transition characteristics of OMP, COMB-OMP, BP and COMB-BP when $\mathbf{G} \in \mathbb{R}^{M \times K_g}$ $(K_g = 400, K_x = 200, K_d = 200)$ is obtained from a Gaussian ensemble and the non-zero coefficients are realized from a uniform distribution. The probability of recovery for each contour are also provided in the figures.}
\label{Fig:phase_tr_char}
\end{figure*}

\begin{figure*}[c]
\begin{minipage}[b]{0.32\linewidth}
  \centering
  \includegraphics[width=5cm]{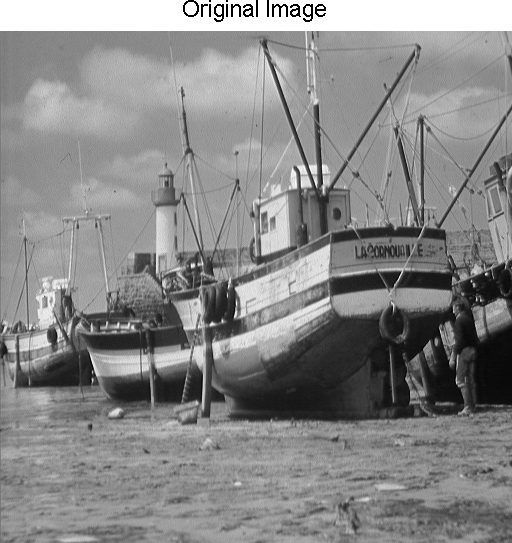}
  \centerline{(a)}\medskip
\end{minipage}
\hfill
\begin{minipage}[b]{0.32\linewidth}
  \centering
  \includegraphics[width=5cm]{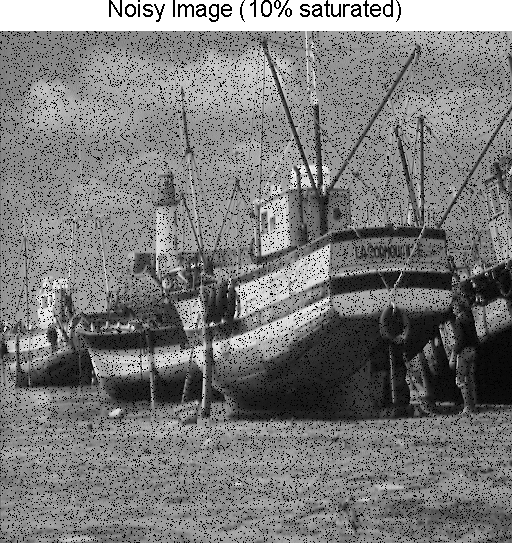}
  \centerline{(b)}\medskip
\end{minipage}
\hfill
\begin{minipage}[b]{0.32\linewidth}
  \centering
  \includegraphics[width=5cm]{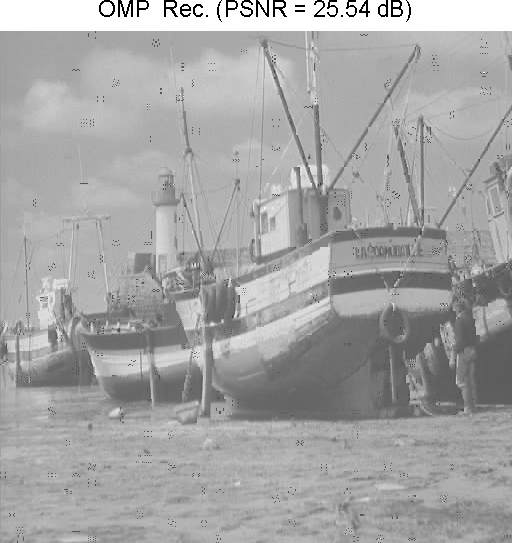}
  \centerline{(c)}\medskip
\end{minipage}
\hfill
\begin{minipage}[b]{0.32\linewidth}
  \centering
  \includegraphics[width=5cm]{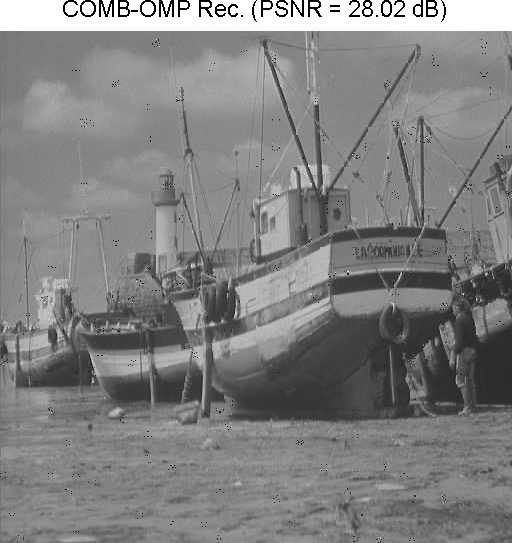}
  \centerline{(d)}\medskip
\end{minipage}
\hfill
\begin{minipage}[b]{0.32\linewidth}
  \centering
  \includegraphics[width=5cm]{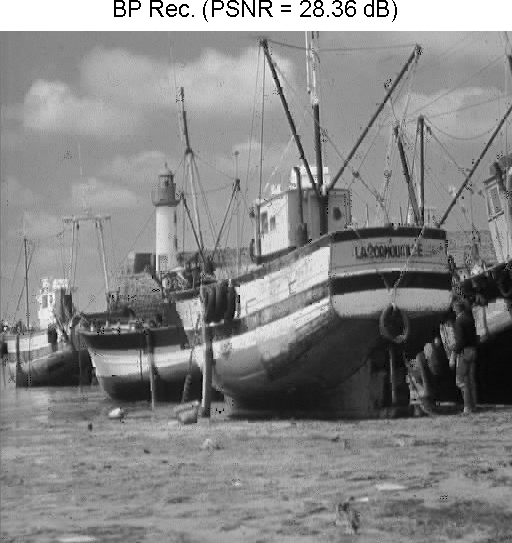}
  \centerline{(e)}\medskip
\end{minipage}
\hfill
\begin{minipage}[b]{0.32\linewidth}
  \centering
  \includegraphics[width=5cm]{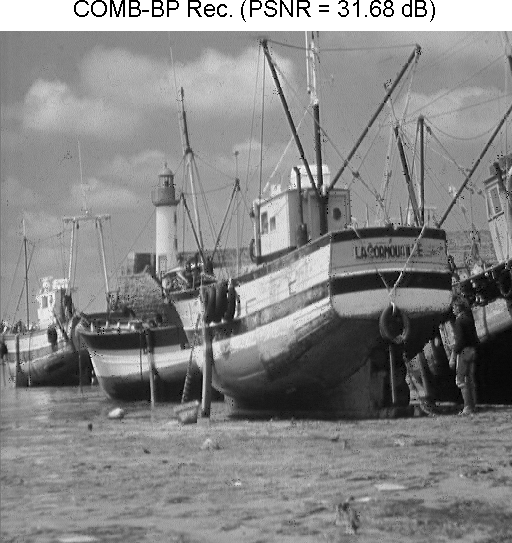}
  \centerline{(f)}\medskip
\end{minipage}

\caption{Actual image, corrupted image ($10\%$ saturation noise) and the recovered images obtained with OMP, COMB-OMP, BP and COMB-BP algorithms. The recovery PSNR for each method are also provided.}
\label{Fig:image_recovery_app}
\end{figure*}

\section{Computational Complexity}
\label{sec:comp_complexity}
In order to quantify the computational complexity, it is essential to consider the implementation specifics. The OMP and COMB-OMP algorithms were directly implemented in MATLAB, whereas the LARS-LASSO \cite{efron2004least} solver in the open-source SPAMS package \cite{mairal2010online} was used to obtain the BP solution. The COMB-BP algorithm and its MATLAB mex interface were implemented by modifying the LARS-LASSO solver in SPAMS. We will focus only on the order complexity of the algorithms since the actual computer time could change based on various factors, such as parallelization etc.

We will first consider the order complexity of the greedy algorithms. Assume a signal $\mathbf{y} \in \mathbb{R}^{M}$ having a $S_g-$sparse representation on the dictionary $\mathbf{G} \in \mathbb{R}^{M \times K_g}$. As described in \cite{mailhe2009low}, at any step $t$, the dominant complexity operations for OMP are: (a) computing the correlation vector, which is of order $M K_g$, (b) computing the Gram matrix for the chosen $t$ atoms, which is of order $tM$, (c) computing the coefficients which is of order $t^2$, and (d) computing the residual, which is of order $tM$. For $S_g$ steps, the total complexity of each of these are of order $S_g M K_g$, $\sum_{t=1}^{S_g} t M$, $\sum_{t=1}^{S_g} t^2$ and $\sum_{t=1}^{S_g} t M$ respectively. Assuming that $K_g > M$ and knowing that $S_g \leq M$, using simple algebra, we obtain the order complexity of OMP to be $S_g M K_g$.

In order to obtain the computational complexity expression for COMB-OMP, we will first compare the steps in OMP and the COMB-OMP algorithms. From Table \ref{tab:comb_omp}, it is clear that the differences between the two algorithms are: (a) in COMB-OMP the coefficient support is updated by performing \textit{max} operations on the two subsets of the $\pi$ array, whereas OMP consists of \textit{max} operation on the whole array, (b) the solution update is constrained to be partially non-negative in COMB-OMP and, (c) a final debiasing step needs to be performed in COMB-OMP which is not the case with OMP. In our implementation, we observed that removing the coefficient sign constraint in the solution update step, and removing the debiasing step do not significantly affect the recovery performance, and the most important consideration is to update the support correctly. Therefore, the complexity of our COMB-OMP implementation is also of order $S_g M K_g$.

For the BP and COMB-BP algorithms, since we use LARS-based solvers, the complexity is fairly straightforward to quantify. LARS-type algorithms update the coefficient support one step at a time using correlations, and perform inversion of Gram matrices similar to OMP. Following through the steps of the LARS algorithm described in \cite[Section 2]{efron2004least}, we can easily identify that the dominant complexity step is to compute the correlations between the dictionary and the residual, similar to OMP. Hence the dominant complexity of this algorithm is also of order $S_g M K_g$. We also note that, since LARS solvers with LASSO modification (LARS-LASSO) can eliminate some coefficients from the active set of coefficients, the actual complexity may be slightly higher. However, this increase cannot be quantified accurately.

\begin{table}[t]
  \centering
  \caption{Recovery performance of the algorithms in the presence of saturation noise whose sign is known. For each image and percentage of saturation noise level, the PSNR (dB) obtained with the OMP, COMB-OMP, COMB-BP and BP algorithms are given in clockwise order beginning from top left corner.}
\begin{tabular}{|c||c|c||c|c||c|c||c|c||c|c|}
\hline 
\textbf{\% Sat} & \multicolumn{2}{c||}{\textbf{Barbara}} & \multicolumn{2}{c||}{\textbf{Boat}} &\multicolumn{2}{c||}{\textbf{House}}&\multicolumn{2}{c||}{\textbf{Lena}}& \multicolumn{2}{c|}{\textbf{Peppers}}  \\ 
\hline 
\hline
\multirow{2}[0]{*}{5} & 30.41 & 32.39 & 30.29 & 32.06 & 32.02 & 35.65 & 32.79 & 35.11 & 28.43 & 30.02 \\
\cline{2-11}
 & 30.89 & \textbf{35.38} & 30.95 & \textbf{35.58} & 32.61 & \textbf{38.98} & 33.46 & \textbf{38.55} & 28.88 & \textbf{33.33} \\
 \hline 
 \hline
 \multirow{2}[0]{*}{10} & 25.48 & 27.73 & 25.44 & 27.77 & 26.78 & 29.18 & 26.93 & 29.95 & 25.11 & 27.63 \\
\cline{2-11} 
 & 27.83 & \textbf{30.92} & 28.45 & \textbf{32.00} & 30.35 & \textbf{34.90} & 30.97 & \textbf{34.84} & 27.35 & \textbf{31.04} \\
 \hline 
 \hline
\multirow{2}[0]{*}{15} & 19.64 & 21.93 & 19.54 & 21.75 & 19.59 & 22.10 & 20.44 & 22.91 & 19.86 & 21.83 \\
\cline{2-11}
& 24.19 & \textbf{26.53} & 25.04 & \textbf{27.44} & 26.28 & \textbf{29.14} & 27.21 & \textbf{29.88} & 23.76 & \textbf{26.66} \\
\hline 
\hline
\multirow{2}[0]{*}{20} & 15.72 & 17.32 & 15.24 & 16.94 & 14.92 & 16.64 & 15.92 & 17.74 & 15.47 & 17.22 \\
\cline{2-11}
 & 19.78 & \textbf{21.72} & 19.96 & \textbf{22.01} & 20.00 & \textbf{22.28} & 21.36 & \textbf{23.68} & 19.51 & \textbf{21.65} \\
 \hline 
 \hline
\multirow{2}[0]{*}{25} & 13.13 & 14.39 & 12.60 & 13.93 & 12.34 & 13.74 & 13.06 & 14.38 & 12.89 & 14.13 \\
\cline{2-11}
& 15.80 & \textbf{17.26} & 15.78 & \textbf{17.30} & 15.91 & \textbf{17.53} & 16.56 & \textbf{18.34} & 15.68 & \textbf{17.25} \\
\hline 
\end{tabular} 
  \label{tab:recovery_perf_images}%
\end{table}%

\section{Experiments}
\label{sec:exp}
The COMB-BP and the COMB-OMP algorithms incorporate the prior knowledge that a set of coefficients to be recovered are non-negative. If we use BP and OMP algorithms for recovery, this prior knowledge cannot be exploited. In order to establish that this additional information leads to improvement in recovery performance, we performed numerical experiments by realizing the elements of dictionary $\mathbf{G} \in \mathbb{R}^{M \times K_g}$ from an i.i.d. zero-mean, unit-variance, Gaussian distribution. The non-zero elements in the coefficient vector $\boldsymbol{\delta}$ were realized from a random sign distribution ($\pm 1$) or from a uniform distribution. We varied the proportion of the non-negative and the unconstrained coefficients in the combined representation and tested the performance of OMP, COMB-OMP, BP, and COMB-BP algorithms in exact and approximate recovery. When measuring the performance of the algorithms in approximate recovery, we also considered various additive noise conditions. We rigorously quantified the performance of the algorithms in compressed recovery using empirical phase transition diagrams \cite{donoho2010precise}. These diagrams provide an accurate picture of the performance of compressed recovery algorithms for various levels of sparsity and number of measurements. Finally, we demonstrate the utility of the proposed algorithms using a real-world application where the goal is to recover an image corrupted with sparse saturation noise whose sign is known. 

\subsection{Exact Coefficient Recovery}
\label{sec:exact_rec}
For these experiments, we fixed the total number of atoms in $\mathbf{G}$ at $K_g = 200$ and the dimension of the signal $M$ at $100$. The three cases tested were (a) $K_x = 50$, $K_d = 150$, (b) $K_x = 100$, $K_d = 100$, and (c) $K_x = 150$, $K_d = 50$. For each trial, the locations of the non-zero coefficients in $\boldsymbol{\delta}$ were chosen uniformly at random. The non-zero coefficients themselves were realized either from a uniform distribution or a sign distribution. For the uniform distribution case, the non-negative coefficients were obtained from the uniform distribution $U(0,1)$ and the general coefficients were obtained from $U(-1,1)$. For the sign distribution case, the general coefficients were obtained with equiprobable positive and negative signs, and the non-negative coefficients were fixed as $1$. The number of non-zero coefficients $S_x$ and $S_d$ were varied from $1$ to $30$ each, and hence the total number of non-zero coefficients, $S_g$, varied from $2$ to $60$. For each $\{S_x,S_d\}$ pair, $1000$ trials were performed. In each trial, $\mathbf{y}$ was obtained using the combined model (\ref{eqn:nonneg_rep_sparse_corr}), and coefficient recovery was performed using the four algorithms. 

While recovering each coefficient vector, the OMP and COMB-OMP were constrained to stop when either the maximum number of non-zero coefficients was $M$, or when the $\ell_2$ norm of the residual vector, $\|\mathbf{y}-\mathbf{G}\hat{\boldsymbol{\delta}}\|_2 < 10^{-6}$. For the BP and COMB-BP algorithms, only the residual error energy constraint was used. Note that in either case, we did not explicitly specify the number of general coefficients and non-zero coefficients. In order to measure the performance of coefficient recovery, we define the relative recovery error between the recovered coefficient vector $\hat{\boldsymbol{\delta}}$ and the actual coefficient vector ${\boldsymbol{\delta}}$ as
\begin{equation}
\label{eqn:rel_rec_err}
RRE = \frac{\|\hat{\boldsymbol{\delta}} - \boldsymbol{\delta}\|_2^2}{\|\boldsymbol{\delta}\|_2^2}.
\end{equation}  If the $RRE$ is less than $10^{-6}$, the coefficient is said to be recovered exactly.

Let us first consider the exact recovery case when the coefficients were realized from the uniform distribution. From Figure \ref{Fig:gauss_unif_exact_rec}, it can be seen that as $K_x$ increases, the performance of COMB-BP and COMB-OMP become increasingly better when compared to that of BP and OMP respectively. In particular, the performance of COMB-BP substantially improves as the non-negative component in the representation becomes bigger. The experiments clearly show that COMB-BP and COMB-OMP perform better than BP and OMP respectively. Furthermore, the presence of a large non-negative component substantially improves the recovery performances of COMB-BP and COMB-OMP. For the OMP and BP algorithms, an average deterministic sparsity threshold of $1$ was obtained for all combinations of $\{K_x,K_g\}$, using $1000$ realizations of $\mathbf{G}$. This threshold was computed using Corollary 4 in \cite{Kuppinger2011}, which applies to any pair of general dictionaries. From Section \ref{sec:comb_rec_l1_cond}, we know that the threshold for COMB-BP is the same as that of OMP and BP. For COMB-OMP also, the average sparsity threshold is $1$, when computed using (\ref{eqn:coh_cond2_comb_omp}). The empirical performance of the four algorithms is similar when the non-zero coefficients were realized from a random sign distribution, as observed from Figure \ref{Fig:gauss_signs_exact_rec}.

\subsection{Approximate Coefficient Recovery}
\label{sec:approx_rec}
The experimental setting for approximate coefficient recovery was similar to that in Section \ref{sec:exact_rec} and for each $\{S_x,S_d\}$ pair, a total of $1000$ trials were performed. However, we considered only the uniform distribution for the non-zero coefficients and the performance was measured by averaging the RRE over all the trials. It can be observed from Figure \ref{Fig:gauss_unif_rec_err} that the COMB-BP and COMB-OMP algorithms exhibit lesser average RRE when compared to their unconstrained counterparts. In this case, the gap in performance between OMP and COMB-OMP is very pronounced when $K_x$ is large.

We also considered the approximate recovery performance in noisy data conditions when $K_x=100$ and $K_d=100$. The data was corrupted with additive Gaussian noise such that the signal-to-noise ratios (SNRs) were $\{0,5,15,25\}$ dBs. The approximate recovery performance is given in Figures \ref{Fig:gauss_unif_rec_err_noisy} (a)-(d). Under high noise conditions, the greedy algorithms perform better than the convex algorithms for a small number of nonzero coefficients, but as the number of non-zero coefficients increase, the performance of $\ell_1$-based algorithms become better. Furthermore, increasing the additive noise reduces the coefficient recovery performance, expectedly. Note that, for all noise conditions, incorporating knowledge of coefficient signs always leads to improvement in recovery performance.
 
\subsection{Phase Transition Characteristics}
\label{sec:phase_tran}
Since one of the main applications of the proposed algorithms is in compressed recovery, we obtained the empirical phase transition characteristics of the algorithms to quantify the effect of number of measurements and sparsity in the recovery performance. The measurement matrix $\mathbf{G} \in \mathbb{R}^{M \times K_g}$ was realized from a Gaussian distribution with $K_g = 400$. It is assumed to be a concatenation of the non-negative and general dictionaries with sizes $K_x=200$ and $K_d=200$ respectively. The non-zero coefficients were obtained from uniform distributions. The number of measurements $M$ were varied between $20$ and $400$ in increments of $20$. For each measurement, we fixed the number of non-zero coefficients $K_g$ as $\rho M$, where $\rho$ is the sparsity factor varied between $0.05$ and $1$ in increments of $0.05$. $200$ trials were conducted for each combination of $M$ and $K_g$. In each trial, $S_x$ was chosen as $\lfloor \tau S_g \rfloor$, where $\tau$ is chosen uniformly randomly between $0$ and $1$, $\lfloor. \rfloor$ is the \textit{rounding-down} function, and $S_d = S_g-S_x$. The number of exact recoveries were measured for each combination of $M$ and $S_g$. For each $M$, the phase transition between the recovery and the no-recovery regime happens at that $S_g$ for which there is a $50 \%$ chance of recovery. This is given in Figure \ref{Fig:phase_tr_char} for the four algorithms, where the contours for recovery probabilities of $0.25$, $0.5$, and $0.75$ are provided. Careful analysis of the diagrams show that incorporating partial knowledge about coefficient signs improves the phase transition characteristics for both greedy and $\ell_1$-based algorithms, and it is more pronounced for convex algorithms.

\subsection{Application in Image Recovery}
\label{sec:image_patch_rec}
We consider a demonstrative application to illustrate possible improvements that can be achieved in image recovery by incorporating partial knowledge of coefficient signs. Note that we are not comparing this with the state-of-the-art but rather only aim to demonstrate the possible utility of our algorithms. The goal here is to recover an image corrupted with saturation noise of known sign. This could involve setting a small fraction of pixels to either $0$ or $255$, which are the minimum and maximum values for the pixels in standard grayscale images. In our case, we saturate 5, 10, 15, 20, and 25$\%$ of the pixels to $255$. 

To recover the image, we divided it into non-overlapping patches of size $8 \times 8$ and vectorized them. The image patches themselves are assumed to be sparse in an orthonormal DCT basis of $64$ atoms and the saturation noise is assumed to be non-negative and sparse in the basis $\mathbf{X} = -\mathbf{I}$, where $\mathbf{I} \in \mathbb{R}^{64 \times 64}$ is an identity matrix. We recovered the coefficients $\boldsymbol{\alpha}$ and $\boldsymbol{\beta}$ using the four algorithms with the stopping criteria, $\|\mathbf{y}-\mathbf{G}\hat{\boldsymbol{\delta}}\|_2 < 10^{-6}$. Each recovered image patch is given by $\mathbf{D}\boldsymbol{\beta}$, using its coefficient vector $\boldsymbol{\beta}$ for the DCT basis. The recovered patches were then arranged to reconstruct the image. Since $\mathbf{D}$ and $\mathbf{X}$ are orthonormal, we have  $\mu_x=\mu_d = 0$, and $\mu_g$ is computed as $0.2405$. The deterministic sparsity thresholds for OMP, BP and COMB-BP are computed as $3$, using Corollary 4 in \cite{Kuppinger2011}. For COMB-OMP, this threshold is $2$, as obtained using (\ref{eqn:coh_cond2_comb_omp}). However, these thresholds are again quite pessimistic, since even a $5 \%$ noise level leads to more than $3$ non-zero coefficients for $\mathbf{X}$ alone. The empirical performance is much better, as described in the experiments below.

The recovery performance of various algorithms is tabulated in Table \ref{tab:recovery_perf_images} for the standard images \textit{Barbara}, \textit{Boat}, \textit{House}, \textit{Lena} and \textit{Peppers256} obtained from the SIPI database \cite{uscsipi}. For each image and noise level, the PSNR in dB for the OMP, COMB-OMP, COMB-BP and BP algorithms are given in clockwise order starting from the upper left corner. Clearly, incorporating partial knowledge of coefficient signs improves the recovery performance, sometimes in the excess of 4 dB. The original, $10\%$ saturated and recovered \textit{Boat} images are shown in Figure \ref{Fig:image_recovery_app}, which supports the same argument. This is particularly apparent when we compare the images recovered with OMP and COMB-OMP algorithms.

\section{Conclusions}
\label{sec:conc}
We considered the problem of recovering sparse solutions from a overcomplete linear model when the solution vector was constrained to be either completely or partially non-negative. When the solution was completely non-negative we derived conditions, based on the theory of polytopes, on the dictionary for the existence of a unique solution. In the case of combined sparse representations, we considered cases when the coefficient support was completely known, partially known or completely unknown. When the coefficient support was completely unknown, we proposed the COMB-OMP algorithm and derived the deterministic sparsity threshold that guarantees recovery of the unique, minimum $\ell_0$ norm solution. Experimental results, using dictionaries drawn from a Gaussian ensemble and non-zero coefficients realized from a uniform distribution or a equiprobable distribution of signs, showed that the COMB-BP and COMB-OMP algorithms perform better in terms of exact and approximate recovery compared to their unconstrained counterparts. We also characterized the recovery performance rigorously using phase transition diagrams and provided a demonstrative real-world application in image recovery for the proposed algorithms. A possible direction for future work is to derive probabilistic sparsity thresholds for the COMB-BP algorithm, under appropriate assumptions on the dictionary and the coefficient vectors, that will explain the improved experimental performance of sparse recovery algorithms with non-negativity constraints when compared to their unconstrained versions.

%
%
%

\appendix
\section{Proof of Lemma \ref{lem:comb_omp_l1_cond}}
\label{app:comb_omp_l1_cond}
For COMB-OMP to recover the unique sparsest representation, no atom from $\mathbf{G}_2$ must enter the support set $\mathcal{G}_t$ at any iteration. Therefore, the residual $\mathbf{r}_t$ at each iteration $t$ must satisfy the condition
\begin{equation}
\label{eqn:comb_rep_omp_cond}
\rho(\mathbf{r}_t) \equiv \frac{\max(\max(\mathbf{X}_2^T\mathbf{r}_t, \mathbf{0}), \|\mathbf{D}_2^T \mathbf{r}_t\|_\infty)}
                          {\max(\max(\mathbf{X}_1^T\mathbf{r}_t, \mathbf{0}), \|\mathbf{D}_1^T \mathbf{r}_t\|_\infty)} < 1.
\end{equation} Since
\begin{equation}
\nonumber
\max(\max(\mathbf{X}_2^T\mathbf{r}_t, \mathbf{0}), \|\mathbf{D}_2^T \mathbf{r}_t\|_\infty) \leq \|\mathbf{G}_2^T \mathbf{r}_t\|_{\infty},
\end{equation}  $\rho(\mathbf{r}_t)$ can be bounded as
\begin{align}
\label{eqn:comb_rep_omp_cond2}
\rho(\mathbf{r}_t) &\leq \frac{\|\mathbf{G}_2^T \mathbf{r}_t\|_{\infty}}
                          {\max(\max(\mathbf{X}_1^T\mathbf{r}_t, \mathbf{0}), \|\mathbf{D}_1^T \mathbf{r}_t\|_\infty)}\\
                          \label{eqn:comb_rep_omp_cond3}
&= \frac{\|\mathbf{G}_2^T (\mathbf{G}_1^{\dagger})^T \mathbf{G}_1^T \mathbf{r}_t\|_{\infty}}
                          {\max(\max(\mathbf{X}_1^T\mathbf{r}_t, \mathbf{0}), \|\mathbf{D}_1^T \mathbf{r}_t\|_\infty)}\\
                          \label{eqn:comb_rep_omp_cond4}
&\leq \frac{\|\mathbf{G}_2^T (\mathbf{G}_1^{\dagger})^T\|_{\infty,\infty} \|\mathbf{G}_1^T \mathbf{r}_t\|_{\infty}}
                          {\max(\max(\mathbf{X}_1^T\mathbf{r}_t, \mathbf{0}), \|\mathbf{D}_1^T \mathbf{r}_t\|_\infty)}\\
                          \label{eqn:comb_rep_omp_cond5}
&= \|\mathbf{G}_2^T (\mathbf{G}_1^{\dagger})^T\|_{\infty,\infty} \\
\label{eqn:comb_rep_omp_cond6}
&= \|\mathbf{G}_1^{\dagger} \mathbf{G}_2 \|_{1,1}    \\
\label{eqn:comb_rep_omp_cond7}
&= \max_{i \in \mathcal{G}^c} \|\mathbf{G}_1^{\dagger} \mathbf{g}_i \|_{1}      
\end{align} Eqn. (\ref{eqn:comb_rep_omp_cond3}) holds since $(\mathbf{G}_1^{\dagger})^T \mathbf{G}_1^T$ is an orthoprojector onto the column space of $\mathbf{G}_1$. Both $\mathbf{y}$ and $\mathbf{G}\boldsymbol{\delta_t}$ lie in the column space  of $\mathbf{G}_1$ and hence  $\mathbf{r}_t$ lies in the same space. The properties of $\|.\|_{\infty,\infty}$ ensures that (\ref{eqn:comb_rep_omp_cond4}) is true. By assumption, the denominator of (\ref{eqn:comb_rep_omp_cond4}) equals $\|\mathbf{G}_1^T\mathbf{r}_t\|_{\infty} $.  Therefore, (\ref{eqn:comb_rep_omp_cond5}) holds true and (\ref{eqn:comb_rep_omp_cond6}) follows from relation $\|\mathbf{A}^T\|_{\infty,\infty} = \|\mathbf{A}\|_{1,1}$ for any matrix $\mathbf{A}$. From (\ref{eqn:comb_rep_omp_cond}), (\ref{eqn:comb_rep_omp_cond2}) and (\ref{eqn:comb_rep_omp_cond7}), the sufficient condition provided in (\ref{eqn:comb_omp_l1_cond}) is obtained.

\section{Proof of Lemma \ref{lem:coh_cond1_comb_omp}}
\label{app:coh_cond1_comb_omp}
For (\ref{eqn:comb_omp_res_rel}) to be satisfied, the sufficient condition is that
\begin{equation}
\label{eqn:comb_omp_nn_res_corr}
\max(\mathbf{X}_1^T\mathbf{r}_t, \mathbf{0}) = \|\mathbf{X}_1^T \mathbf{r}_t\|_\infty.
\end{equation} Therefore, we only have to consider the case where an atom from $\mathbf{X}_1$ will be picked. Let us denote $\boldsymbol{\delta}_1 = [\boldsymbol{\alpha}_1^T \quad \boldsymbol{\beta}_1^T]^T$ and $\mathbf{z} = \mathbf{X}_1^T \mathbf{y} = \mathbf{X}_1^T \mathbf{G}_1 \boldsymbol{\delta}_1$. We will derive the bounds on the maximum positive value, $z_m$, and the minimum negative value, $z_n$, of $\mathbf{z}$.  We denote the smallest possible lower bound on $z_m$ as $\hat{z}_m$, and the largest possible lower bound on $|z_n|$ as $\hat{z}_n$. The worst-case guarantee for  (\ref{eqn:comb_omp_nn_res_corr}) to be true is
\begin{align}
\label{eqn:rel_neg_pos_bound}
\hat{z}_m > \hat{z}_n. 
\end{align} 

Using the fact that
\begin{equation}
\nonumber
\mathbf{X}_1^T\mathbf{G}_1 =  [\mathbf{I}_{S_x} \quad \mathbf{0}] + [\mathbf{X}_1^T\mathbf{X}_1-\mathbf{I}_{S_x} \quad \mathbf{X}_1^T\mathbf{D}_1],
\end{equation} the correlation vector $\mathbf{z}$ can be expressed as
\begin{align}
\label{eqn:corr_vec}
\mathbf{z} &= \boldsymbol{\alpha}_1 + [\mathbf{X}_1^T\mathbf{X}_1-\mathbf{I}_{S_x} \quad \mathbf{X}_1^T\mathbf{D}_1]\boldsymbol{\delta}_1.
\end{align} Let us define $\mathbf{C}_1 = [\mathbf{X}_1^T\mathbf{X}_1-\mathbf{I}_{S_x} \quad \mathbf{X}_1^T\mathbf{D}_1]$ and the elementwise bounds on the submatrices are,
\begin{align}
\nonumber |\mathbf{X}_1^T\mathbf{X}_1-\mathbf{I}_{S_x}| &\leq \mu_x(\mathbf{1}_{S_x,S_x}-\mathbf{I}_{S_x}) \\
\nonumber &\leq \mu_d(\mathbf{1}_{S_x,S_x}-\mathbf{I}_{S_x}),
\end{align} and $\mathbf{X}_1^T\mathbf{D}_1 \leq |\mu_g \mathbf{1}_{S_x,S_d}|$. It is clear that the maximum row sum of $\mathbf{C}_1$ is
\begin{equation}
\label{eqn:maxrowsum_C1}
\|\mathbf{C}_1\|_{\infty,\infty} \leq (S_x - 1)\mu_d + S_d \mu_g.
\end{equation}

In order to derive the smallest lower bound on $z_m$, we will assume that all the coefficients in $\boldsymbol{\delta}_1$ have the same absolute value given by $\alpha$, and hence, from (\ref{eqn:corr_vec}), we have
\begin{align} 
\nonumber
z_m &\geq \alpha - \alpha \|\mathbf{C}_1\|_{\infty,\infty}\\
\label{eqn:pos_max_bound1}
&\geq \alpha(1 - [(S_x - 1)\mu_d + S_d \mu_g]) \equiv \hat{z}_m.
\end{align} The required bound on $z_n$ can be obtained by setting one element of $\boldsymbol{\alpha}_1$ as $\hat{\alpha}$, where $ 0 < \hat{\alpha} < \alpha$, and the absolute value of all the other elements of $\boldsymbol{\delta}_1$ as $\alpha$. We now have
\begin{align}
\nonumber
z_n \geq \hat{\alpha} - \alpha\|\mathbf{C}_1\|_{\infty,\infty}, 
\end{align} and as $\hat{\alpha} \rightarrow 0$,
\begin{align}
\label{eqn:neg_min_bound1}
|z_n| < \alpha [(S_x - 1)\mu_d + S_d \mu_g] \equiv \hat{z}_n,
\end{align} which is the largest possible lower bound on $z_n$. Subsituting  (\ref{eqn:pos_max_bound1}) and (\ref{eqn:neg_min_bound1}) in (\ref{eqn:rel_neg_pos_bound}), results in (\ref{eqn:coh_cond1_comb_omp}).

\section{Proof of Lemma \ref{lem:rec_cond_comb_omp}}
\label{app:rec_cond_comb_omp}
The condition for success of COMB-OMP can be written as
\begin{align}
\label{eqn:rec_cond_comb_omp1}
\max_{i \in \mathcal{G}^c} \|\mathbf{G}_1^{\dagger} \mathbf{g}_i \|_{1} 
&\leq \|(\mathbf{G}_1^T\mathbf{G}_1)^{-1}\|_{1,1} \max_{i \in \mathcal{G}^c} \|\mathbf{G}_1^T \mathbf{g}_i\|_1,
\end{align} using the property of $\|.\|_{1,1}$ and the fact that $\mathbf{G}_1^{\dagger} = (\mathbf{G}_1^T\mathbf{G}_1)^{-1}\mathbf{G}_1^T$.

In order to compute the lower bound for $\|(\mathbf{G}_1^T\mathbf{G}_1)^{-1}\|_{1,1}$, we first expand the Gramm matrix
\begin{equation}
\label{eqn:gram_mat_exp} 
 \mathbf{G}_1^T \mathbf{G}_1 = \mathbf{I}_{S_g} + \mathbf{C},
\end{equation} where
\begin{align}
\nonumber
\mathbf{C} \equiv \left[ \begin{array}{cc}
\mathbf{X}_1^T\mathbf{X}_1-\mathbf{I}_{S_x} & \mathbf{X}_1^T\mathbf{D}_1 \\
\mathbf{D}_1^T\mathbf{X}_1 & \mathbf{D}_1^T\mathbf{D}_1-\mathbf{I}_{S_d}
\end{array} \right].
\end{align} $\mathbf{C}$ can be bounded elementwise as
\begin{align}
\nonumber
|\mathbf{C}| &\leq \left[ \begin{array}{cc}
\mu_x(\mathbf{1}_{S_x,S_x}-\mathbf{I}_{S_x}) & \mu_g \mathbf{1}_{S_x,S_d} \\
\mu_g \mathbf{1}_{S_d,S_x} & \mu_d(\mathbf{1}_{S_d,S_d}-\mathbf{I}_{S_d})
\end{array} \right] \\
\nonumber
&\leq \left[ \begin{array}{cc}
\mu_d(\mathbf{1}_{S_x,S_x}-\mathbf{I}_{S_x}) & \mu_g \mathbf{1}_{S_x,S_d} \\
\mu_g \mathbf{1}_{S_d,S_x} & \mu_d(\mathbf{1}_{S_d,S_d}-\mathbf{I}_{S_d})
\end{array} \right],
\end{align} since $\mu_x \leq \mu_d$ by assumption. The maximum column sum of $\mathbf{C}$ is bounded as 
\begin{equation}
\label{eqn:maxrowsum_C}
\|\mathbf{C}\|_{1,1} \leq (S_x - 1)\mu_d + S_d \mu_g,
\end{equation} since $S_x \leq S_d$. From (\ref{eqn:gram_mat_exp}), we also observe that $\|\mathbf{C}\|_{1,1} < 1$, since $\mathbf{G}_1^T \mathbf{G}_1$ is strictly diagonally dominant, because of the linear independence of the columns of $\mathbf{G}_1$. Using (\ref{eqn:gram_mat_exp}), we can write
\begin{align}
\label{eqn:rec_cond_comb_omp2a}
\|(\mathbf{G}_1^T\mathbf{G}_1)^{-1}\|_{1,1}  = \|(\mathbf{I}_{S_g} + \mathbf{C})^{-1}\|_{1,1}.
\end{align} The Neumann series $\sum_k (-\mathbf{C})^k$ converges to $(\mathbf{I}_{S_g} + \mathbf{C})^{-1}$, whenever $\|\mathbf{C}\|_{1,1} < 1$ \cite{Kreyszig1989} and hence (\ref{eqn:rec_cond_comb_omp2a}) can be expressed as
\begin{align}
\nonumber
\|(\mathbf{G}_1^T\mathbf{G}_1)^{-1}\|_{1,1}  &= \left\|\sum_{k=1}^{\infty}(-\mathbf{C})^k \right\|_{1,1} \\
\nonumber
&\leq \sum_{k=1}^{\infty}\left\|\mathbf{C}\right\|_{1,1}^k  \\
\nonumber
&= \frac{1}{1-\|\mathbf{C}\|_{1,1}}\\
\label{eqn:rec_cond_comb_omp5}
&\leq \frac{1}{1-(S_x \mu_d + S_d \mu_g - \mu_d)},
\end{align} using (\ref{eqn:maxrowsum_C}). If $\mathbf{g}_i$ is a vector chosen from $\mathbf{X}_2$, $|\mathbf{G}_1^T \mathbf{g}_i| \leq [\mu_d \mathbf{1}_{S_x}^T \quad \mu_g \mathbf{1}_{S_d}^T]^T$ and hence we have
\begin{align}
\label{eqn:rec_cond_comb_omp6}
\max_{i \in \mathcal{G}^c} \|\mathbf{G}_1^T \mathbf{g}_i\|_1 \leq S_x \mu_d + S_d \mu_g = (S_x+S_d) \mu_d + S_d (\mu_g-\mu_d).
\end{align} If $\mathbf{g}_i$ is chosen from $\mathbf{D}_2$, $|\mathbf{G}_1^T \mathbf{g}_i| \leq [\mu_g \mathbf{1}_{S_x}^T \quad \mu_d \mathbf{1}_{S_d}^T]^T$. Therefore
\begin{align}
\label{eqn:rec_cond_comb_omp7}
\max_{i \in \mathcal{G}^c} \|\mathbf{G}_1^T \mathbf{g}_i\|_1 \leq S_x \mu_g + S_d \mu_d = (S_x+S_d) \mu_d + S_x (\mu_g-\mu_d).
\end{align} Since $S_d \geq S_x$, among (\ref{eqn:rec_cond_comb_omp6}) and (\ref{eqn:rec_cond_comb_omp7}), we will choose (\ref{eqn:rec_cond_comb_omp6}) as our bound. Substituting (\ref{eqn:rec_cond_comb_omp6}) and (\ref{eqn:rec_cond_comb_omp5}) in (\ref{eqn:rec_cond_comb_omp1}), we can obtain (\ref{eqn:rec_cond_comb_omp}) as the condition for COMB-OMP to succeed.

\section{Proof of Lemma \ref{lem:coh_cond_comb_omp_overall}}
\label{app:coh_cond_comb_omp_overall}

Rewriting (\ref{eqn:rec_cond_comb_omp}), we obtain
\begin{equation}
\nonumber
S_x < \frac{1+ \mu_d - 2 S_d \mu_g}{2 \mu_d} \equiv \psi(S_d).
\end{equation} The threshold on the total number of non-zero coefficients, $S_g$, can be obtained by minimizing $\psi(S_d)+S_d$ over $S_d$. The constraint is that $S_d \geq 1$ since $S_x \leq S_d$ and the overall representation will have at least one non-zero coefficient. Denoting $S$ to be the sparsity threshold, it can be obtained as
\begin{align}
\nonumber
S = \min_{S_d \geq 1} [S_d+\psi(S_d)].
\end{align} Relaxing the constraint that $S_d$ is an integer, the minimum will be obtained when $\mu_d = \mu_g = \mu_m$, where $\mu_m \equiv \max(\mu_d,\mu_g)$ and the minimum value is
\begin{align}
\nonumber
S = 0.5 \left(1+\frac{1}{\mu_m}\right),
\end{align} which is the strict upper bound on $S_g$. Since $\mu_d \geq \mu_x$ by assumption, we can generalize the definition of $\mu_m$ as $\mu_m \equiv \max(\mu_d,\mu_g, \mu_x)$. Note that this worst-case sparsity threshold automatically makes the matrix of chosen atoms $\mathbf{G}_1$ to be full-rank, thereby satisfying the requirement posed by Lemma \ref{lem:rec_cond_comb_omp}. This is because, in this case the Gram matrix $\mathbf{G}_1^T \mathbf{G}_1$ will be strictly diagonally dominant and hence non-singular.

\section{Proof of Lemma \ref{lem:coh_cond2_comb_omp}}
\label{app:coh_cond2_comb_omp}

Since we defined $\mu_m \equiv \max(\mu_g,\mu_d,\mu_x)$, we will assume that the overall coherence of $\mathbf{G}_1$ is $\mu_m$ and the total number of columns in $\mathbf{G}_1$ is $S_g$. We will denote $\mathbf{G}_1 = [\mathbf{G}_a \quad \mathbf{G}_b]$, where $\mathbf{G}_a$ with $S_a$ columns contains the atoms already chosen for the representation and $\mathbf{G}_b$ contains $S_b = S_g-S_a$ atoms, one of which will be chosen by the residual. The residual $\mathbf{r}_t$ will be simply denoted as $\mathbf{r}$ for notational convenience and is obtained using a least squares procedure as 
\begin{equation}
\label{eqn:residual_ls}
\mathbf{r} = \mathbf{y}-\mathbf{G}_a \boldsymbol{\delta}_a,
\end{equation} where
\begin{equation}
\label{eqn:coeff_ls} 
\boldsymbol{\delta}_a = \mathbf{G}_a^{\dagger} \mathbf{y}.
\end{equation}

Let us denote the correlation vector as
\begin{equation}
\label{eqn:corr_vec_res}
\mathbf{z} = \mathbf{G}_b^T \mathbf{r}.
\end{equation} Similar to the proof of Lemma \ref{lem:coh_cond1_comb_omp}, we are only concerned about recovering the non-negative coefficients as it will give us sufficient conditions under which (\ref{eqn:comb_omp_res_rel}) will be satisfied. The bounds on the maximum positive value, $z_m$, and the minimum negative value, $z_n$, of $\mathbf{z}$ will be derived assuming that all the elements in $\boldsymbol{\delta}_b$ are constrained to be non-negative. In this case, the smallest possible lower bound on $z_m$, given by $\hat{z}_m$ and the largest possible lower bound on $|z_n|$ given by $\hat{z}_n$.  For (\ref{eqn:comb_omp_res_rel}) to hold for any $\mathbf{r}_t$, where $t\geq 1$, similar to the proof of Lemma \ref{lem:coh_cond1_comb_omp}, we need to show that 
\begin{equation}
\label{eqn:zm_gt_zn}
\hat{z}_m > \hat{z}_n.
\end{equation}

We will first expand (\ref{eqn:corr_vec_res}) using (\ref{eqn:residual_ls}) and (\ref{eqn:coeff_ls})
\begin{align}
\nonumber
\mathbf{z} &= \mathbf{G}_b^T(\mathbf{y} - \mathbf{G}_a \mathbf{G}_a^{\dagger} \mathbf{y}) \\
\label{eqn:corr_vec1}
 &= \mathbf{G}_b^T(\mathbf{I} - \mathbf{G}_a \mathbf{G}_a^{\dagger}) \mathbf{G}_1 \boldsymbol{\delta}_1 \\
 \label{eqn:corr_vec2}
 &= \mathbf{G}_b^T(\mathbf{I} - \mathbf{G}_a \mathbf{G}_a^{\dagger}) \mathbf{G}_b \boldsymbol{\delta}_b,
\end{align} which is obtained by substituting $\mathbf{G}_1 = [\mathbf{G}_a \quad \mathbf{G}_b]$ and $\boldsymbol{\delta}_1 = [\boldsymbol{\delta}_a^T \quad \boldsymbol{\delta}_b^T]^T$ in (\ref{eqn:corr_vec1}). Let us denote any two distinct columns from $\mathbf{G}_b$ by $\mathbf{g}_i$ and $\mathbf{g}_j$. Let us also denote the matrix $\mathbf{Q} = \mathbf{G}_b^T(\mathbf{I} - \mathbf{G}_a \mathbf{G}_a^{\dagger})\mathbf{G}_b$, which is of size $S_b \times S_b$, and designate its $(i,j)^{\text{th}}$ element to be $q_{ij}$. The correlation vector in (\ref{eqn:corr_vec2}) can be now expressed as
\begin{align}
\nonumber
\mathbf{z} = \mathbf{Q} \boldsymbol{\delta}_b
\end{align}  We will compute bounds on the diagonal and off-diagonal elements of $\mathbf{Q}$. Expanding $\mathbf{G}^{\dagger}$, we can write
\begin{align}
\label{eqn:corr_gij}
|q_{ij}| &= \left|\mathbf{g}_i^T[\mathbf{I} - \mathbf{G}_a (\mathbf{G}_a^T\mathbf{G}_a)^{-1}\mathbf{G}_a^T] \mathbf{g}_j\right| \\
\label{eqn:corr1_gij}
&\leq \left|\mathbf{g}_i^T \mathbf{g}_j\right| + \left| \mathbf{g}_i^T \mathbf{G}_a (\mathbf{G}_a^T\mathbf{G}_a)^{-1}\mathbf{G}_a^T \mathbf{g}_j\right| \\
\label{eqn:corr2_gij}
& \leq \left|\mathbf{g}_i^T \mathbf{g}_j\right| + \left\| \mathbf{G}_a^T \mathbf{g}_i\right\|_2^2 \left\|(\mathbf{G}_a^T\mathbf{G}_a)^{-1}\right\|_2.
\end{align} Eqn. (\ref{eqn:corr1_gij}) follows from applying triangle inequality on (\ref{eqn:corr_gij}) and  (\ref{eqn:corr2_gij}) is obtained by upper bounding the second term in the right hand side of (\ref{eqn:corr1_gij}). We can express
\begin{align}
\nonumber
\left\| \mathbf{G}_a^T \mathbf{g}_i\right\|_2^2 \leq S_a \mu_m^2
\end{align} since the maximum absolute coherence between any two elements in $\mathbf{G}_1$  is $\mu_m$. Since $\left\|(\mathbf{G}_a^T\mathbf{G}_a)^{-1}\right\|_2 \leq 1/\lambda_{min}(\mathbf{G}_a^T\mathbf{G}_a)$, and by Gershgorin's disc theorem \cite[Theorem 6.1.1]{Horn1985}, $\lambda_{min}(\mathbf{G}_a^T\mathbf{G}_a)  \leq [1-\mu_m(S_a-1)]^{+}$, we can rewrite (\ref{eqn:corr2_gij}) as
\begin{align}
\label{eqn:corr4_gij}
|q_{ij}| \leq \mu_m +  \frac{S_a \mu_m^2}{[1-\mu_m(S_a-1)]^{+}}.
\end{align} When $i=j$, we have
\begin{align}
\label{eqn:corr5_gij}
|q_{ii}| &= \left|\mathbf{g}_i^T[\mathbf{I} - \mathbf{G}_a (\mathbf{G}_a^T\mathbf{G}_a)^{-1}\mathbf{G}_a^T] \mathbf{g}_i\right| \\
\label{eqn:corr6_gij}
&\geq \left|\mathbf{g}_i^T \mathbf{g}_j\right| - \left| \mathbf{g}_i^T \mathbf{G}_a (\mathbf{G}_a^T\mathbf{G}_a)^{-1}\mathbf{G}_a^T \mathbf{g}_j\right| \\
\label{eqn:corr7_gij}
& \geq 1-\frac{S_a \mu_m^2}{[1-\mu_m(S_a-1)]^{+}}.
\end{align} Eqn. (\ref{eqn:corr6_gij}) follows from applying reverse triangle inequality on (\ref{eqn:corr5_gij}) and (\ref{eqn:corr7_gij}) is obtained by following steps similar to the derivation of upper bound on $|q_{ij}|$. The bounds given by (\ref{eqn:corr4_gij}) and (\ref{eqn:corr7_gij}) are valid only if $1-\mu_m(S_a-1) > 0$, which can be verified by substituting $\mu_m < 1/(2S_g - 1)$, from (\ref{eqn:coh_cond2_comb_omp}), and $S_a < S_g$. Therefore, (\ref{eqn:corr5_gij}) and (\ref{eqn:corr7_gij}) can be rewritten as $|q_{ij}| \leq q_1$, $|q_{ii}| \geq q_2$, where
\begin{align}
\nonumber
q_1 &= C_1  \mu_m \\
\nonumber
q_2 &= C_1 (1-S_a \mu_m),
\end{align} and $C_1 = (1+\mu_m)/[1-\mu_m(S_a-1)]$. 

We know that the diagonal elements of $\mathbf{Q}$ are lower bounded by $q_1$ and the off-diagonal elements are upper bounded by $q_2$. Since there are $S_b$ rows in $\mathbf{Q}$, the smallest lower bound on $z_m$ is
\begin{align}
\label{eqn:wor_case_zm}
z_m \geq \alpha q_2 - \alpha (S_b-1) q_1 \equiv \hat{z}_m
\end{align} which is obtained when all the elements in $\boldsymbol{\delta}_b$ are set to $\alpha$. The required bound on $z_n$ is obtained by setting one element of $\boldsymbol{\delta}_b$ corresponding to a positive coefficient as $\hat{\alpha}$, where $0<\hat{\alpha}<\alpha$, $\hat{\alpha}$ approaches zero and all the other values in $\boldsymbol{\delta}_b$ are set to $\alpha$. $z_n$ can be now bounded as $z_n \geq \hat{\alpha} q_2 - \alpha q_1 (S_b-1)$. As $\hat{\alpha} \rightarrow 0$, we have
\begin{align}
\label{eqn:best_case_zm}
|z_n| < \alpha (S_b-1) q_2 \equiv \hat{z}_n. 
\end{align} Using (\ref{eqn:zm_gt_zn}), (\ref{eqn:wor_case_zm}) and (\ref{eqn:best_case_zm}), we have
\begin{align}
\nonumber
\mu_m < \frac{1}{2 S_g - S_a - 2},
\end{align}which is always satisfied since we know from (\ref{eqn:coh_cond2_comb_omp}) that $\mu_m < 1/(2 S_g-1)$ and $S_a \geq 1$.





\bibliographystyle{elsarticle-num}
\bibliography{refs}







\end{document}